\theoremstyle{plain} 
\newtheorem{theorem}{Theorem}[section]
\newtheorem*{theorem*}{Theorem}
\newtheorem{lemma}[theorem]{Lemma}
\newtheorem{proposition}[theorem]{Proposition}
\newtheorem{conjecture}[theorem]{Conjecture}
\newtheorem{maintheorem}{Theorem}[section]
\theoremstyle{definition} 
\newtheorem{definition}[theorem]{Definition}
\theoremstyle{remark} 
\newcommand{\R}{\mathbb{R}}
\newcommand{\Z}{\mathbb{Z}}
\newcommand{\C}{\mathbb{C}}
\newcommand{\Sph}{\mathbb{S}}
\newcommand{\Quaternion}{\mathbb{H}}
\newcommand{\F}{\mathbb{F}}
\DeclareMathOperator{\Span}{span}
\newcommand{\de}{\mathrm{d}}
\newcommand{\deint}{\;\mathrm{d}}
\DeclareMathOperator{\trace}{tr}
\DeclareMathOperator{\ad}{ad}
\DeclareMathOperator{\Hom}{Hom}
\newcommand{\diff}[2]{\dfrac{\de #1}{\de #2}}
\newcommand{\lb}[2]{\left[ #1,#2 \right]}
\renewcommand{\iff}{\Leftrightarrow}
\newcommand{\iprod}[2]{\left\langle #1, #2 \right\rangle}
\newcommand{\prep}[2]{\mathfrak{p}_{#1}^{(#2)}}
\newcommand{\qrep}[2]{\mathfrak{q}_{#1}^{(#2)}}
\newcommand{\quotient}[2]{#1/#2}
\title{The Alekseevskii Conjecture in 9 and 10 Dimensions}
\author{Rohin Berichon}
\address{School of Mathematics and Physics, The University of Queensland, St Lucia, QLD, 4072, Australia}
\email{r.berichon@uq.edu.au}
\begin{document}
    \maketitle
    \begin{abstract}
        We show that non-compact homogeneous spaces not diffeomorphic to Euclidean space of dimension 9 or 10 admit no homogeneous Einstein metrics of negative Ricci curvature, with only three potential exceptions. The main ingredient in the proof is to show, via a cohomogeneity-one approach, that non-compact homogeneous spaces admitting an ideal isomorphic to $ \mathfrak{sl}_2(\R) $ admit no homogeneous Einstein metrics of negative Ricci curvature.
    \end{abstract}

    \section{Introduction}
    { 
        A Riemannian manifold $ \left( \mathcal{M}, g \right) $ is called \textit{Einstein} if the Ricci tensor satisfies $ \mathrm{Ric}(g) = \lambda \cdot g $ for some $ \lambda \in \R $. In general, it is far too optimistic to provide existence criteria for solutions to the Einstein equation. Instead, a widespread method is to impose some sort of symmetry assumption, or to restrict to metrics with special holonomy. In this article, we study the existence of Einstein metrics on non-compact homogeneous spaces.
    }

    { 
        The study of homogeneous Einstein metrics is roughly grouped into the cases when the Ricci curvature is positive, zero, or negative. If the Ricci curvature is positive, the manifold is compact, and has finite fundamental group by the Bonnet-Myers theorem \cite{myers_1941}. It was proven in \cite{alekseevskii_kimelfeld_1975} that homogeneous Ricci flat manifolds are flat. Finally, it is known that homogeneous Einstein manifolds of negative Ricci curvature are non-compact, by \cite{bochner_1948}. At present, all known examples of these spaces are isometric to simply-connected solvmanifolds. We have the following

    }
    \begin{conjecture}[Alekseevskii, 1975, \cite{besse_2008}]
        Any connected homogeneous Einstein manifold of negative scalar curvature is diffeomorphic to a Euclidean space.
    \end{conjecture}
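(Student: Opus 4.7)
The plan is to reduce the conjecture to exhibiting a simply-transitive, simply-connected solvable subgroup $S$ of the isometry group of $(\mathcal{M},g)$, after which $\mathcal{M} \cong S$ is diffeomorphic to $\R^n$ by the standard fact that every simply-connected solvable Lie group is diffeomorphic to a Euclidean space. Present $\mathcal{M} = G/H$ for some connected Lie group $G$ of isometries with closed isotropy $H$, and consider the Levi decomposition $\mathfrak{g} = \mathfrak{s} \ltimes \mathfrak{r}$, where $\mathfrak{s}$ is a maximal semisimple subalgebra and $\mathfrak{r}$ is the solvable radical.

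The core of the argument is to show that the non-compact part of $\mathfrak{s}$ vanishes. Decompose $\mathfrak{s} = \mathfrak{s}_c \oplus \mathfrak{s}_{nc}$ into compact and non-compact semisimple ideals. Since $\mathfrak{sl}_2(\R)$ is the smallest non-compact simple real Lie algebra, every non-compact simple factor of $\mathfrak{s}$ contains an $\mathfrak{sl}_2(\R)$-subalgebra via a restricted root. I would aim to exploit the main ingredient of the paper, which rules out Einstein metrics when $\mathfrak{sl}_2(\R)$ appears as an ideal, by passing to a closed subgroup $G' \subseteq G$ whose orbit through the basepoint inherits a homogeneous Einstein structure (or at least a cohomogeneity-one Einstein system soluble by the same techniques) and in whose Lie algebra the chosen $\mathfrak{sl}_2(\R)$-subalgebra becomes a genuine ideal. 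Once $\mathfrak{s}_{nc} = 0$, the remaining compact semisimple ideal $\mathfrak{s}_c$ cannot contribute to transitivity on the non-compact manifold $\mathcal{M}$ (invoking the Bochner theorem cited in the introduction), so after passing to a minimal transitive subgroup it is absorbed into $\mathfrak{h}$. The Lie algebra of the transitive action is then solvable, and a suitable simply-connected cover yields the desired simply-transitive solvable action.

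The main obstacle is the $\mathfrak{sl}_2(\R)$-extraction step. The mere existence of an $\mathfrak{sl}_2(\R)$-subalgebra inside each non-compact simple factor is cheap, but producing one that is simultaneously an ideal of an auxiliary Lie algebra whose associated homogeneous space still carries an Einstein structure is precisely the delicate point keeping the full conjecture open. This difficulty is what forces the present article to restrict to dimensions $9$ and $10$, where the list of possible Levi factors is short enough to analyze directly and the auxiliary subgroups can be identified explicitly. A dimension-independent proof would likely require either strengthening the main ingredient to rule out any non-compact simple ideal, or developing a more flexible cohomogeneity-one framework that extracts the required substructures from arbitrary non-compact simple factors without first reducing to the $\mathfrak{sl}_2(\R)$ case.
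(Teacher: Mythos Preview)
The statement you are attempting to prove is the Alekseevskii \emph{Conjecture}; it is open, and the paper does not prove it. There is therefore no ``paper's own proof'' to compare your proposal against. The paper's contribution is Theorem~A, which verifies the conjecture only in dimensions $9$ and $10$ modulo a short explicit list of exceptions, via a case-by-case classification of the relevant semisimple homogeneous spaces (Section~4) together with Theorem~B on $\mathfrak{sl}_2(\R)$ ideals.

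You already diagnose the essential gap in your own final paragraph, but let me make it sharper. Theorem~B applies only when $\mathfrak{sl}_2(\R)$ is an \emph{ideal} of $\mathfrak{g}$, not merely a subalgebra sitting inside some non-compact simple factor. Your proposed workaround --- pass to a closed subgroup $G'\subseteq G$ in which the chosen $\mathfrak{sl}_2(\R)$ becomes an ideal and whose orbit still carries a homogeneous Einstein structure --- is exactly the step for which no general mechanism is known. Restriction of an Einstein metric to an orbit of a proper subgroup is essentially never Einstein, and the cohomogeneity-one argument underlying Theorem~B depends on very specific features (orbit space $\mathbb{S}^1$, integral minimality, the $\mathfrak{sl}_2(\R)$ factor being a direct summand so that three Borel subalgebras can be rotated to force orthogonality). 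None of this transfers to an $\mathfrak{sl}_2(\R)$-triple inside, say, $\mathfrak{sl}_3(\R)$ or $\mathfrak{su}(2,1)$; indeed those groups survive as unresolved exceptions in the paper. The author also remarks explicitly that already in dimension $11$ the space $(\mathsf{SL}_2(\C)\cdot\R)\ltimes\C^2$ defeats these methods.

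A secondary issue: even granting $\mathfrak{s}_{nc}=0$, the sentence ``the remaining compact semisimple ideal $\mathfrak{s}_c$ cannot contribute to transitivity on the non-compact manifold $\mathcal{M}$ (invoking the Bochner theorem)'' is not correct as stated. Bochner's theorem rules out compact homogeneous spaces with negative Ricci curvature; it does not by itself prevent a group with a compact semisimple Levi factor from acting transitively on a non-compact space. What one actually needs here is a separate structural argument (in the spirit of Dotti or of the non-unimodular case handled in \cite{arroyo_lafuente_2016}) to pass from $\mathfrak{g}=\mathfrak{s}_c\ltimes\mathfrak{r}$ to a transitive solvable subgroup.
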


    Current results verify the Alekseevskii conjecture in dimension at most 8 with 3 possible exceptions (see \cite{arroyo_lafuente_2016} and references therein). In 2 and 3 dimensions, homogeneous Einstein manifolds have constant sectional curvature, so are diffeomorphic to $ \R^n $. \cite{jensen_1969} classified the 4 dimensional simply-connected homogeneous Einstein spaces of negative curvature, implying the conjecture. \cite{nikonorov_2005} verified the conjecture in 5 dimensions with the possible exception of spaces admitting a transitive $ \mathsf{SL}_2(\C) $ action (see Proposition \ref{cor:no_sl2c_minimals}). In the 6 dimensional case, the conjecture was verified with the exception of $ \mathsf{SL}_{2}(\C) $ and the universal cover of $ \mathsf{SL}_{2}(\R)^{2} $ by \citelist{\cite{arroyo_lafuente_2015}\cite{jablonski_petersen_2017}}, the latter of which being resolved by \cite{bohm_lafuente_2019}*{Corollary 6.4}. Finally, \cite{arroyo_lafuente_2016}*{Theorem B, C} verified the conjecture in dimensions 7 and 8 with the possible exception of $ \mathsf{SU}(2, 1) $ and $ \mathsf{SL}_3(\R) $.

    The primary goal of this paper is to prove the following

    \begin{maintheorem}\label{thm:main_alek_theorem}
        Let $ \left( \mathcal{M}^n, g \right) $ be a simply-connected homogeneous Einstein manifold with $ \lambda < 0 $ of dimension at most 10, which is de Rham irreducible. If $ \left( \mathcal{M}, g \right) $ is not an invariant metric on the universal covers of $ \mathsf{Sp}(1, 1)/\Delta\mathsf{U}(1) $, $ \mathsf{Sp}(1, 1) $, $ \mathsf{Sp}(2, \R) $, $ \mathsf{SL}_3(\R) $, $ \mathsf{SU}(2, 1) $, or $ \mathsf{SL}_2(\C) $, then $ \mathcal{M}^n $ is diffeomorphic to Euclidean space.
    \end{maintheorem}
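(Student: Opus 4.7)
The strategy is to combine the new $\mathfrak{sl}_2(\R)$-ideal exclusion---stated as the main ingredient in the abstract---with the structural results already in the literature, and then reduce Theorem~\ref{thm:main_alek_theorem} to a finite case analysis organised by the Levi subalgebra of a transitive presentation.

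First, I would choose a transitive presentation $\mathcal{M} = G/H$ with $H$ compact and write $\mathfrak{g} = \mathfrak{s} \ltimes \mathfrak{r}$ for the Levi decomposition. If $\mathfrak{s} = 0$, Heber--Lauret theory of Einstein solvmanifolds yields that $(\mathcal{M}, g)$ is a standard solvmanifold, hence diffeomorphic to $\R^n$. So assume $\mathfrak{s} \neq 0$; by the available structural results (notably B\"ohm--Lafuente and Jablonski--Petersen), we may further assume that $\mathfrak{s}$ has no compact simple factors. The main new theorem then gives that $\mathfrak{g}$ contains no ideal isomorphic to $\mathfrak{sl}_2(\R)$; in particular, any $\mathfrak{sl}_2(\R)$ summand of $\mathfrak{s}$ must act nontrivially on $\mathfrak{r}$, since the semidirect structure forces $[\mathfrak{s}_i, \mathfrak{r}] \subset \mathfrak{s}_i \cap \mathfrak{r} = 0$ whenever a simple summand $\mathfrak{s}_i \subset \mathfrak{s}$ is a $\mathfrak{g}$-ideal.

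Next, I enumerate admissible $\mathfrak{s}$. The non-compact simple Lie algebras of dimension at most $10$ are exactly $\mathfrak{sl}_2(\R)$, $\mathfrak{sl}_2(\C)$, $\mathfrak{su}(2,1)$, $\mathfrak{sl}_3(\R)$, $\mathfrak{sp}(2,\R)$, and $\mathfrak{sp}(1,1) \cong \mathfrak{so}(4,1)$. For each candidate $\mathfrak{s}$---simple or a small semisimple sum such as $\mathfrak{sl}_2(\R)^{\oplus 2}$, $\mathfrak{sl}_2(\R)^{\oplus 3}$, or $\mathfrak{sl}_2(\R) \oplus \mathfrak{sl}_2(\C)$---and each compatible radical $\mathfrak{r}$ satisfying $\dim \mathfrak{g} = n + \dim \mathfrak{h}$ with $n \in \{9,10\}$, I would check whether the resulting manifold is excluded or identified with one of the listed exceptional spaces. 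Exclusion is achieved by combining: (a) the new $\mathfrak{sl}_2(\R)$-ideal result; (b) de~Rham irreducibility, which rules out product presentations (for instance $\mathfrak{sl}_2(\R) \oplus \mathfrak{sl}_2(\C)$ acting on a split radical, or any $\mathfrak{sl}_2(\R)^{\oplus k}$ with centralising action); and (c) earlier results in dimensions at most $8$ due to Jensen, Nikonorov, Arroyo--Lafuente, Jablonski--Petersen, and B\"ohm--Lafuente, which handle the restriction of the problem to smaller sub-quotients obtained by quotienting out by an ideal of $\mathfrak{g}$.

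The main obstacle is expected to be the subcase in which an $\mathfrak{sl}_2(\R)$ summand of $\mathfrak{s}$ acts nontrivially on a nonabelian nilradical and therefore fails to be a $\mathfrak{g}$-ideal, placing the case outside the direct reach of the new main theorem. In these cases one must derive a contradiction from the interplay between the $\mathfrak{sl}_2(\R)$-module grading of $\mathfrak{r}$ induced by a Jacobson--Morozov element and the negative Einstein equation, typically by showing that the tracing identities for the Ricci operator, together with the weight decomposition of $\mathfrak{r}$, force either a metric splitting---contradicting de~Rham irreducibility---or produce an invariant of the wrong sign for a negative Einstein metric.
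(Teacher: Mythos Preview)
Your plan misses the key reduction that makes the paper's argument tractable: one may assume $G$ is \emph{semisimple}, i.e.\ the radical $\mathfrak{r}$ is trivial. The paper invokes \cite{arroyo_lafuente_2016}*{Theorem D} for the non-unimodular case and \cite{dotti_1988}*{Theorem 2} for the unimodular case to reduce to $\mathfrak{g}$ semisimple with no compact simple factors. Once $\mathfrak{r}=0$, every simple summand of $\mathfrak{g}$ is automatically an ideal, so Theorem~\ref{thm:sl2r_theorem} kills all $\mathfrak{sl}_2(\R)$ factors outright. Your ``main obstacle''---an $\mathfrak{sl}_2(\R)$ summand acting nontrivially on a nonabelian nilradical---therefore never arises, and the Jacobson--Morozov/weight-grading argument you sketch is aimed at a phantom.

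With the reduction to semisimple $G$ in hand, the actual content of the proof is quite different from what you outline. One must classify all \emph{pairs} $(\mathfrak{g},\mathfrak{h})$ with $\mathfrak{g}$ non-compact semisimple (no compact or $\mathfrak{sl}_2(\R)$ factors), $\mathfrak{h}$ compactly embedded, and $\dim\mathfrak{g}/\mathfrak{h}\in\{9,10\}$---not merely list the simple $\mathfrak{s}$ of dimension $\le 10$. The paper organises this via the compact/non-compact duality for symmetric pairs $(\mathfrak{g},\mathfrak{k})$ with $\mathfrak{h}\subset\mathfrak{k}$ (Table~\ref{tab:symm_space} and \S\ref{sec:classification}), producing a finite list of AU-spaces. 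Most of these are then eliminated by Nikonorov's Theorem~\ref{thm:nikonorov_thm_1}: if no irreducible $\mathrm{Ad}(\mathsf{H})$-submodule of $\mathfrak{q}$ is isomorphic to one of $\mathfrak{p}$ in the Cartan decomposition $\mathfrak{m}=\mathfrak{q}\oplus\mathfrak{p}$, every invariant metric makes $\mathfrak{q}\perp\mathfrak{p}$ and cannot be Einstein. The handful of survivors (e.g.\ $\mathsf{SL}_2(\Quaternion)/\mathsf{Sp}(1)^2$, $\mathsf{SL}_2(\C)^2/\mathsf{U}(1)^2$) are dispatched by parametrising the space of invariant metrics explicitly and computing the Ricci tensor via \eqref{eq:ricci_formula}, showing either a forced Riemannian splitting or a Ricci eigenvalue of the wrong sign. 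None of these steps---the duality-based classification, Nikonorov's orthogonality criterion, or the explicit Ricci computations---appear in your plan, and they are where the work lies.
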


    Regarding the simply-connected assumption, suppose $ \left( \mathcal{M}^n, g \right) $ is a homogeneous Einstein metric of negative Ricci curvature. If the universal cover $ \widetilde{\mathcal{M}} $, which is also homogeneous and Einstein, is diffeomorphic to Euclidean space, then by \cite{bohm_lafuente_2019}, $ \widetilde{\mathcal{M}} $ is isometric to a simply-connected Einstein solvmanifold. Therefore, by \cite{jablonski_2015}*{Theorem 1.1}, $ \widetilde{\mathcal{M}} $ does not admit any non-trivial quotient, thus in order to verify the Alekseevskii conjecture, it is sufficient to check only the simply-connected homogeneous spaces. 

    It is also important to note the de Rham irreducible assumption in the statement of the theorem. Assuming the Alekseevskii conjecture holds in dimensions at most 8, Theorem \ref{thm:main_alek_theorem} verifies the conjecture in dimensions 9 and 10, with the possible exceptions detailed therein, since on de Rham reducible manifolds, the Einstein equation splits into the Einstein equation on each of the irreducible factors.

    Unfortunately, our methods do not allow us to extend these results to higher dimensions. In 11 dimensions for example, the Einstein equation on the homogeneous space $ \left( \mathsf{SL}_2(\C)\cdot \R \right)\ltimes \C^2 $, with $ \mathsf{SL}_2(\C) $ acting with the standard representation on $ \C^2 $ reduces to an equation for left-invariant metrics on $ \mathsf{SL}_2(\C) $ \cite{arroyo_lafuente_2016}.

    Our main tool for proving Theorem \ref{thm:main_alek_theorem} is the following result, which allows us to ignore homogeneous spaces $ \mathsf{G}/\mathsf{H} $ with an ideal in $ \mathfrak{g} $ isomorphic to $ \mathfrak{sl}_2(\R) $.
    
    \begin{maintheorem}\label{thm:sl2r_theorem}
        If $ \left( \mathcal{M}^n, g \right) $ is a simply-connected homogeneous Einstein manifold with minimal presentation $ \mathsf{G}/\mathsf{H} $, with $ \mathsf{G} $ semisimple, with $ \mathrm{Ric}(g) = -g $, then no ideal in $ \mathfrak{g} = \mathrm{Lie}(\mathsf{G}) $ is isomorphic to $ \mathfrak{sl}_2\left( \R \right) $.
    \end{maintheorem}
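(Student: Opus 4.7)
The plan is to argue by contradiction. Assume $ \mathfrak{g} $ has an ideal $ \mathfrak{g}_1 \cong \mathfrak{sl}_2(\R) $, and decompose $ \mathfrak{g} = \mathfrak{g}_1 \oplus \mathfrak{g}_2 $ with $ \mathfrak{g}_2 $ a complementary semisimple ideal. Since $ \mathsf{H} $ is compact (as the isotropy of a Riemannian metric under a connected transitive isometry group), its projection $ \pi_1(\mathfrak{h}) \subset \mathfrak{sl}_2(\R) $ is a compact subalgebra, hence either $ 0 $ or (conjugate to) $ \mathfrak{so}(2) $; minimality of the presentation rules out $ \pi_1(\mathfrak{h}) = \mathfrak{sl}_2(\R) $.

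In both remaining cases I would produce a codimension-one subalgebra $ \mathfrak{k} \subset \mathfrak{g} $ containing $ \mathfrak{h} $: when $ \pi_1(\mathfrak{h}) = 0 $ take $ \mathfrak{k} = \mathfrak{b} \oplus \mathfrak{g}_2 $ with $ \mathfrak{b} \subset \mathfrak{sl}_2(\R) $ a Borel subalgebra; when $ \pi_1(\mathfrak{h}) = \mathfrak{so}(2) $ build $ \mathfrak{k} $ from $ \mathfrak{h} $ plus a compatible root space so that $ \mathfrak{k} \cap \mathfrak{g}_1 $ is two-dimensional. The associated closed subgroup $ \mathsf{K} \subset \mathsf{G} $ then acts on $ \mathcal{M} = \mathsf{G}/\mathsf{H} $ with cohomogeneity one, foliating $ \mathcal{M} $ by principal orbits $ \mathsf{K}/(\mathsf{K} \cap \mathsf{H}) $ transverse to a parameter $ t \in \R $.

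Under this cohomogeneity-one reduction, $ \mathrm{Ric}(g) = -g $ becomes the standard ODE system for the family of invariant metrics $ g_t $ on the principal orbit coupled with the shape operator, which is a Hamiltonian system with a conserved ``Einstein energy''. Completeness of $ g $ forces prescribed asymptotics at the boundary values of $ t $, and combined with the algebraic input coming from the $ \mathfrak{sl}_2(\R) $-ideal — whose indefinite Killing form contributes a ``wrong-sign'' term to the Ricci curvature in the direction of the non-compact Cartan, analogous to the classical Milnor obstruction ruling out left-invariant Einstein metrics on $ \mathsf{SL}_2(\R) $ itself — together with the B\"ohm--Lafuente structural theorems cited above, this ODE admits no complete solution, yielding the contradiction.

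I expect Case~2 to be the main obstacle. There the isotropy ``twists'' the $ \mathsf{SL}_2(\R) $ and $ \mathsf{G}_2 $ factors, so that the Ricci operator retains off-diagonal terms between the two ideals and the cohomogeneity-one ODE does not decouple cleanly; one must choose the transverse slice carefully so as to isolate the indefinite $ \mathfrak{sl}_2(\R) $-contribution and close the argument. The rest of the proof — the algebraic classification of $ \pi_1(\mathfrak{h}) $, the construction of the codimension-one subalgebra, and the general form of the Einstein ODE — should follow established techniques.
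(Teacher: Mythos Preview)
Your Case~2 does not exist: minimality of the presentation already forces $\pi_1(\mathfrak{h}) = 0$. If $\pi_1(\mathfrak{h}) = \mathfrak{so}(2)$, take any Borel $\mathfrak{b} \subset \mathfrak{sl}_2(\R)$; then $\mathfrak{sl}_2(\R) = \mathfrak{so}(2) \oplus \mathfrak{b}$ as vector spaces, so $\mathfrak{h} + (\mathfrak{b} \oplus \mathfrak{g}_2) = \mathfrak{g}$, and the connected subgroup with Lie algebra $\mathfrak{b} \oplus \mathfrak{g}_2$ acts transitively on $\mathsf{G}/\mathsf{H}$ with strictly smaller dimension, contradicting minimality. (Your proposed Case~2 construction is also algebraically impossible as stated: there is no $2$-dimensional subalgebra of $\mathfrak{sl}_2(\R)$ containing $\mathfrak{so}(2)$, so you cannot arrange $\mathfrak{k} \cap \mathfrak{g}_1$ to be a $2$-dimensional subalgebra while $\mathfrak{k} \supset \mathfrak{h}$ projects onto $\mathfrak{so}(2)$.) So only Case~1 survives, and the space is, up to local isometry, $\mathsf{PSL}_2(\R) \times (\mathsf{G}_2/\mathsf{H})$ as a homogeneous product.

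Your endgame is also off. The B\"ohm--Lafuente input you want (their Theorem~D) requires the cohomogeneity-one orbit space to be $\mathbb{S}^1$ with integrally minimal orbits; on the simply-connected $\widetilde{\mathsf{SL}_2(\R)}$-factor the Borel action has orbit space $\R$, so one must first pass to the $\mathsf{PSL}_2(\R)$ quotient to get $\mathbb{S}^1$, and then check that the orbits are minimal hypersurfaces (this follows from unimodularity of $\mathfrak{g}$). The conclusion of Theorem~D is not an ODE obstruction but that the orbits are \emph{standard}: the $\hat{g}$-orthogonal complement of the nilradical $\mathfrak{n}_0$ of $\mathfrak{b}$ is a subalgebra. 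A short algebraic argument then shows $\mathfrak{g}_2 \perp \mathfrak{n}_0$. Repeating this with three different Borel subalgebras whose nilradicals span $\mathfrak{sl}_2(\R)$ yields $\mathfrak{sl}_2(\R) \perp_g \mathfrak{g}_2$, so the metric is a Riemannian product, and Milnor's obstruction on the $\mathsf{PSL}_2(\R)$ factor finishes it. No ODE analysis or asymptotics are needed.
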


    {

        We may also restrict ourselves to studying only the semisimple homogeneous spaces by \cite{dotti_1988}*{Theorem 2}, since the case when $ \mathsf{G} $ is nonunimodular was already verified in \cite{arroyo_lafuente_2016}*{Theorem D}. Moreover, irreducible symmetric spaces are all diffeomorphic to solvmanifolds \cite{helgason_1978}.

        The proof of Theorem \ref{thm:main_alek_theorem} is broken into two parts. The first is a classification, which provides a complete account for the remaining AU-spaces (see Definition \ref{def:auspace}) and their corresponding compact dual AU*-spaces. In this part, we provide the decompositions of the isotropy representations of each AU-space into irreducible submodules, indicating isomorphisms between them. In the second part, we verify the non-existence of invariant Einstein metrics of negative curvature in the remaining AU-spaces from the first part which are not covered by other results (see \citelist{\cite{arroyo_lafuente_2016}\cite{nikonorov_2000}}).

        The proof of Theorem \ref{thm:sl2r_theorem} proceeds by contradiction, assuming $ \mathfrak{g} $ admits an ideal isomorphic to $ \mathfrak{sl}_2(\R) $. We show that such a homogeneous space admits an effective cohomogeneity-one action of a closed subgroup of $ \mathsf{G} $ satisfying the conditions of \cite{bohm_lafuente_2019}*{Theorem D}. Using these results, we show that $ (\mathcal{M}^n, g) $ is locally isometric to a Riemannian product of Einstein metrics with one factor isometric to $ \mathsf{PSL}_2(\R) $, giving us our contradiction.
    }

    {
        The existence of homogeneous Einstein metrics on compact homogeneous manifolds is extensively investigated in low dimensions in \cite{bohm_kerr_2004}, with several other existence results in \citelist{\cite{dickinson_kerr_2008}\cite{wang_1982}\cite{wang_ziller_1986}\cite{yan_chen_deng_2019}}. In the case of non-compact homogeneous manifolds, structural results are known for solvmanifolds \citelist{\cite{heber_1998}\cite{lauret_2009}\cite{lauret_2010}} and more generally \citelist{\cite{arroyo_lafuente_2016}\cite{bohm_lafuente_2019}\cite{jablonski_petersen_2017}}.
    }

    {
        In \S2, we provide an overview of the necessary theory of homogeneous manifolds required for the proof of Theorems \ref{thm:main_alek_theorem} and \ref{thm:sl2r_theorem}. In \S3, we prove Theorem \ref{thm:sl2r_theorem}, and in \S4, we prove Theorem \ref{thm:main_alek_theorem}.
    }

    \section{Preliminaries}
    {
        In what follows, we will cover some of the well-known theory of homogeneous Einstein manifolds necessary for the proof of the main theorems. Throughout, we assume that all manifolds are connected, and all presentations of homogeneous spaces are almost-effective with connected transitive group and isotropy. 
    }

    {
        Let $ \mathsf{G}/\mathsf{H} $ be a homogeneous space with $ \mathsf{H} $ compact and fix a reductive decomposition $ \mathfrak{g} = \mathfrak{h}\oplus\mathfrak{m} $ for $ \mathsf{G}/\mathsf{H} $. Then, it is well known that $ \mathsf{G}/\mathsf{H} $ admits $ \mathsf{G} $-invariant Riemannian metrics. Moreover, we have the isomorphism \cite{kobayashi_nomizu_vol_II}*{Chapter X, Proposition 3.1}
        \[
            \mathcal{M}^{\mathsf{G}} \coloneqq \left\{ \begin{matrix}
                \mathsf{G}\text{-invariant metrics}\\
                \text{on }\mathsf{G}/\mathsf{H}
            \end{matrix} \right\}
            \leftrightsquigarrow
            \left\{ \begin{matrix}
                \mathrm{Ad}(\mathsf{H})\text{-invariant inner}\\
                \text{products on }\mathfrak{m}
            \end{matrix} \right\}.
        \]
        On the right is the set of positive-definite, symmetric, non-degenerate, $ \mathsf{Ad}(\mathsf{H}) $-endomorphisms on $ \mathfrak{m} $. Decompose $ \mathfrak{m} $ into $ \mathsf{Ad}(\mathsf{H}) $-irreducible submodules $ \mathfrak{m} = \mathfrak{m}_1 \oplus \mathfrak{m}_2 \oplus \cdots \oplus \mathfrak{m}_k $, and let $ Q $ be an $ \mathrm{Ad}(\mathsf{H}) $-homomorphism defining an invariant inner product on $ \mathfrak{m} $. Denote by $ Q_{ij} $ the restriction of $ Q $ to $ \mathfrak{m}_i $ projected onto $ \mathfrak{m}_j $ with respect to the decomposition above. Then, $ Q_{ij} $ is an $ \mathsf{Ad}(\mathsf{H}) $-homomorphism between irreducible modules. If $ \mathfrak{m}_i \not\simeq \mathfrak{m}_j $, then $ Q_{ij} = 0 $. On the other hand, if $ \mathfrak{m}_i \simeq \mathfrak{m}_j $, then by Schur's Lemma, $ Q_{ij} $ is either an isomorphism, or zero. Moreover, by the Frobenius Theorem, $ \mathrm{End}_{\mathrm{Ad}(\mathsf{H})}(\mathfrak{m}_i) $ is isomorphic to one of $ \R, \C $, or $ \Quaternion $. We say that $ \mathfrak{m}_i $ is of \textit{real}, \textit{complex}, or \textit{quaternionic} type, respectively. 

        Without loss of generality, assume that $ \mathfrak{m} $ is decomposed into isotypical summands $ \mathfrak{m}_1^{n_1}\oplus \cdots \oplus \mathfrak{m}_\ell^{n_\ell} $. Then, summarising the above, the space of $ \mathsf{G} $-invariant metrics on $ \mathsf{G}/\mathsf{H} $ is given by \cite{bohm_2004}
        \[
        	\mathcal{M}^{\mathsf{G}} = \mathfrak{h}^{+}_{n_1}(\F_1)\times \cdots \times \mathfrak{h}^{+}_{n_\ell}(\F_{\ell}),\quad n_1+ \cdots + n_\ell = k,
        \]
        where each $ \mathfrak{h}^{+}_{n_i}(\F_i) $ is the subspace of $ \mathfrak{gl}_{n_i}(\F_i) $ consisting of symmetric\footnote{with respect to the ground field $ \F_i $. That is, if $ \F_i = \R, \C, $ or $ \Quaternion $, then $ \mathfrak{h}_{n_i}^+(\F_i) $ consists of symmetric, hermitian, or quaternionic hermitian matrices respectively.}, positive definite, non-degenerate matrices with entries in $ \F_i \coloneqq \mathrm{End}_\mathsf{H}(\mathfrak{m}_{i}) $.
    }

    {
        Suppose $ \mathsf{G} $ is a non-compact semisimple Lie group, and $ \mathsf{H} $ is a compact subgroup contained strictly within a maximal compact subgroup $ \mathsf{K} $ of $ \mathsf{G} $. Recall here that in passing to a covering of $ \mathsf{G} $, the corresponding covering of $ \mathsf{K} $ may be non-compact. For example, there are no nontrivial compact subalgebras inside the Lie algebra of the universal covering group $ \widetilde{\mathsf{SL}_2(\R)} $ of $ \mathsf{SL}_2(\R) $, but $ \mathsf{SO}(2) $ is maximally compact in $ \mathsf{SL}_2(\R) $. Of course, at the Lie algebra level, there is no difference. Writing $ \mathfrak{g} = \mathfrak{k}\oplus\mathfrak{p} = \mathfrak{h}\oplus\mathfrak{q}\oplus\mathfrak{p} $, with the first equality giving the Cartan decomposition of $ \mathfrak{g} $ and where $ \mathfrak{q} $ is the orthogonal complement of $ \mathfrak{h} $ in $ \mathfrak{k} $ with respect to the Killing form, we obtain a decomposition of the reductive complement of $ \mathfrak{h} $ into $ \mathrm{Ad}(\mathsf{H}) $-submodules $ \mathfrak{m} = \mathfrak{q}\oplus\mathfrak{p} $. Suppose
        \[
        	\mathfrak{q} = \mathfrak{q}_1^{(n_1)}\oplus\cdots\oplus\mathfrak{q}_k^{(n_k)},\quad \mathfrak{p} = \mathfrak{p}_1^{(m_1)}\oplus\cdots\oplus\mathfrak{p}_\ell^{(m_\ell)}
        \]
        is a decomposition of $ \mathfrak{q} $ and $ \mathfrak{p} $ into $ \mathrm{Ad}(\mathsf{H}) $-irreducible submodules $ \mathfrak{q}_i^{(n_i)} $ and $ \mathfrak{p}_j^{(m_j)} $, where $ n_i = \dim\mathfrak{q}_i^{(n_i)} $ and $ m_j = \dim\mathfrak{p}_j^{(m_j)} $. Then,

        \begin{theorem}[\cite{nikonorov_2000}*{Theorem 1}]\label{thm:nikonorov_thm_1}
            Under the above assumptions, suppose $ g $ is a $ \mathsf{G} $-invariant metric on $ \mathsf{G}/\mathsf{H} $ such that $ g(\mathfrak{q}, \mathfrak{p}) = 0 $. Then, $ g $ is not Einstein.
        \end{theorem}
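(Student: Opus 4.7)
The plan is to argue by contradiction. Suppose $g$ is a $\mathsf{G}$-invariant Einstein metric with Einstein constant $\lambda$ and $g(\mathfrak{q},\mathfrak{p})=0$. First, I would note that the Cartan bracket relations $[\mathfrak{k},\mathfrak{p}] \subseteq \mathfrak{p}$ and $[\mathfrak{p},\mathfrak{p}] \subseteq \mathfrak{k}$, together with $\mathrm{Ad}(\mathsf{H})$-invariance of $\mathfrak{q}$ and $\mathfrak{p}$, imply $[\mathfrak{q},\mathfrak{q}]_{\mathfrak{m}},\, [\mathfrak{p},\mathfrak{p}]_{\mathfrak{m}} \subseteq \mathfrak{q}$ and $[\mathfrak{q},\mathfrak{p}]_{\mathfrak{m}} \subseteq \mathfrak{p}$. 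Combined with the Killing orthogonality $B(\mathfrak{k},\mathfrak{p})=0$ and $g(\mathfrak{q},\mathfrak{p})=0$, a direct inspection of the standard formula for the Ricci bilinear form shows that every term in $r(Y,Z)$, for $Y\in\mathfrak{q}$ and $Z\in\mathfrak{p}$, vanishes. Hence $\mathrm{Ric}$ is automatically block-diagonal with respect to $\mathfrak{m}=\mathfrak{q}\oplus\mathfrak{p}$, and the Einstein equation reduces to $\mathrm{Ric}|_{\mathfrak{q}}=\lambda g|_{\mathfrak{q}}$ and $\mathrm{Ric}|_{\mathfrak{p}}=\lambda g|_{\mathfrak{p}}$ with a common constant $\lambda$.

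Next, I would compute the block traces using $g$-orthonormal bases $\{Y_{a}\}$ of $\mathfrak{q}$ and $\{Z_{b}\}$ of $\mathfrak{p}$. The structural constraints above yield
\[
    \mathrm{tr}_{g}\mathrm{Ric}|_{\mathfrak{p}} \;=\; -\tfrac{1}{2}\,\mathrm{tr}_{g}B|_{\mathfrak{p}} \;-\; \tfrac{1}{2}\sum_{a,b}\bigl|[Z_{a},Z_{b}]_{\mathfrak{q}}\bigr|^{2}_{g},
\]
and an analogous expression for $\mathrm{tr}_{g}\mathrm{Ric}|_{\mathfrak{q}}$ involving $\mathrm{tr}_{g}B|_{\mathfrak{q}}$ together with three non-negative bracket sums. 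Since the non-compact Cartan decomposition makes $B|_{\mathfrak{p}}$ positive definite, the first term is strictly negative and the second non-positive, so $\mathrm{tr}_{g}\mathrm{Ric}|_{\mathfrak{p}}<0$, forcing $\lambda<0$.

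Finally, I would apply the variational characterisation of invariant Einstein metrics as critical points of $S \cdot \det(g|_{\mathfrak{m}})^{1/n}$ on the space of $\mathrm{Ad}(\mathsf{H})$-invariant inner products on $\mathfrak{m}$. Along the scaling family $g_{t}=g|_{\mathfrak{q}}\oplus e^{2t}g|_{\mathfrak{p}}$, the scalar curvature expands as $S(g_{t}) = A + e^{-2t}\beta - \tfrac{1}{4}e^{-4t}N$, where $\beta = -\tfrac{1}{2}\mathrm{tr}_{g}B|_{\mathfrak{p}} < 0$, $N = \sum_{a,b}|[Z_{a},Z_{b}]_{\mathfrak{q}}|^{2}_{g} \geq 0$, and $A$ combines $-\tfrac{1}{2}\mathrm{tr}_{g}B|_{\mathfrak{q}}$ (positive, since $B|_{\mathfrak{q}}$ is negative definite) with the bracket sums of types $\mathfrak{q}\times\mathfrak{q}\to\mathfrak{q}$ and $\mathfrak{q}\times\mathfrak{p}\to\mathfrak{p}$. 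The vanishing of the normalised derivative at $t=0$, paired with the trace identity $\dim\mathfrak{q}\cdot\mathrm{tr}_{g}\mathrm{Ric}|_{\mathfrak{p}}=\dim\mathfrak{p}\cdot\mathrm{tr}_{g}\mathrm{Ric}|_{\mathfrak{q}}$ and the companion expansion obtained by scaling $\mathfrak{q}$ instead of $\mathfrak{p}$, produces an overdetermined algebraic system whose sign constraints ($\mathrm{tr}_{g}B|_{\mathfrak{p}}>0$, $\mathrm{tr}_{g}B|_{\mathfrak{q}}<0$, and non-negativity of all bracket norms) cannot be simultaneously met. The main obstacle lies in the regime where the internal bracket structure of $\mathfrak{q}$ is large enough to make $A$ negative, in which case the trace identity alone admits apparent solutions; there one must supplement with the full pointwise Einstein equation on $\mathfrak{p}$, invoking Schur's lemma on the $\mathrm{Ad}(\mathsf{H})$-isotypical decomposition of $\mathfrak{p}$ to rule out the remaining cases.
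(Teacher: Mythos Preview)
The paper does not prove this theorem; it is quoted from Nikonorov's 2000 paper and used as a black box throughout. So there is no ``paper's own proof'' to compare against, and your task was effectively to reproduce Nikonorov's argument.

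Your setup is correct: the bracket relations and the block-diagonality of $\mathrm{Ric}$ with respect to $\mathfrak{m}=\mathfrak{q}\oplus\mathfrak{p}$ follow exactly as you say, and your trace formula $\mathrm{tr}_g\mathrm{Ric}|_{\mathfrak{p}}=-\tfrac12\mathrm{tr}_gB|_{\mathfrak{p}}-\tfrac12\sum|[Z_a,Z_b]_{\mathfrak{q}}|^2<0$ is right, forcing $\lambda<0$.

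The problem is in your final step. The critical-point condition for the normalised scalar curvature along the scaling $g_t=g|_{\mathfrak{q}}\oplus e^{2t}g|_{\mathfrak{p}}$ at $t=0$ reads $\tfrac{\dim\mathfrak{p}}{n}S(0)=\beta-\tfrac12 N$, which is \emph{exactly} the statement $\mathrm{tr}_g\mathrm{Ric}|_{\mathfrak{p}}=\lambda\dim\mathfrak{p}$ that you already know from the block-diagonal Einstein equation. The ``companion expansion'' from scaling $\mathfrak{q}$ likewise just reproduces $\mathrm{tr}_g\mathrm{Ric}|_{\mathfrak{q}}=\lambda\dim\mathfrak{q}$, and your ``trace identity'' $\dim\mathfrak{q}\cdot\mathrm{tr}_g\mathrm{Ric}|_{\mathfrak{p}}=\dim\mathfrak{p}\cdot\mathrm{tr}_g\mathrm{Ric}|_{\mathfrak{q}}$ is an immediate consequence of both traces equalling $\lambda$ times the respective dimensions. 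So your three equations are not overdetermined at all---they are tautologically compatible with the Einstein condition, and the sign constraints you list do not by themselves produce a contradiction. You acknowledge this yourself when you write that in the regime where $A<0$ ``the trace identity alone admits apparent solutions'' and then gesture at Schur's lemma and the pointwise equation on $\mathfrak{p}$; but that last sentence is not an argument, it is a statement that more work is needed. As written, the proof has a genuine gap at precisely the point where the contradiction should appear.

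Nikonorov's actual argument compares $\mathrm{Ric}_g|_{\mathfrak{q}}$ with the intrinsic Ricci curvature of the \emph{compact} homogeneous space $\mathsf{K}/\mathsf{H}$ equipped with $g|_{\mathfrak{q}}$, exploiting the relation between $B|_{\mathfrak{q}}$ and the Killing form $B_{\mathfrak{k}}$ of $\mathfrak{k}$ together with the skew-symmetry of $\mathrm{ad}\,\mathfrak{k}$ on $\mathfrak{p}$. This comparison yields a direction in $\mathfrak{q}$ along which the Ricci curvature is nonnegative, contradicting $\lambda<0$. Your sketch never brings $\mathsf{K}/\mathsf{H}$ into play, which is why it stalls.
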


        By Schur's Lemma, if $ \qrep{i}{n_i} \not\simeq \prep{j}{m_j} $ for any $ i, j $, then $ \mathfrak{p} $ and $ \mathfrak{q} $ are orthogonal for every $ \mathsf{G} $-invariant metric, and hence none of them can be Einstein. On the other hand, if there are $ \qrep{i}{n_i} \simeq \prep{j}{m_j} $, then all we can gather from the decomposition of the isotropy representation into irreducible summands is a parameterisation of the space of invariant metrics in the sense of the prequel. 
    }

    {
        \begin{definition}
            We call a homogeneous space $ \mathsf{G}/\mathsf{H} $ \textit{minimally presented} if $ \mathsf{H} $ is compact, and $ \dim \mathsf{G} $ is minimal amongst all presentations of $ \mathsf{G}/\mathsf{H} $ with compact isotropy. 
        \end{definition}

        In what follows, $ \mathsf{G}/\mathsf{H} $ is a homogeneous space in minimal presentation. In \cite{heber_1998}, Heber attained structural results on so-called \textit{standard} Einstein solvmanifolds, providing criteria for uniqueness results, which were then developed on by \cite{lauret_2010} by showing all Einstein solvmanifolds are standard. Motivated by these results, \cite{bohm_lafuente_2019} generalises the \textit{standardness} condition to arbitrary homogeneous spaces in the following

        \begin{definition}[\cite{bohm_lafuente_2019}]\label{def:standard}
            Let $ \left( \mathsf{G}/\mathsf{H}, g \right) $ be a homogeneous space with compact isotropy and canonical reductive decomposition $ \mathfrak{g} = \mathfrak{h}\oplus \mathfrak{m} $, and denote by $ \hat{g} $ an $ \mathrm{Ad}(\mathsf{H}) $-invariant extension of $ g_{e\mathsf{H}} $ to $ \mathfrak{g} $ such that $ \mathfrak{h}\perp\mathfrak{m} $. Then, we say that $ \left( \mathsf{G}/\mathsf{H}, g \right) $ is \textit{standard} if the $ \hat{g} $-orthogonal complement of the nilradical of $ \mathfrak{g} $ is a Lie subalgebra of $ \mathfrak{g} $.
        \end{definition}

        We have the following result, relating the existence of cohomogeneity-one actions on $ (\mathsf{G}/\mathsf{H}, g) $ with the structure underlying $ \mathfrak{g} $.

        \begin{theorem}[\cite{bohm_lafuente_2019}*{Theorem D}]\label{thm:bohm_laf_2019_thm_d}
            Suppose $ (\mathsf{G}/\mathsf{H}, g) $ is a homogeneous Einstein manifold admitting an effective, cohomogeneity-one action of the closed subgroup $ \overline{\mathsf{G}} $ of $ \mathsf{G} $. If $ \overline{\mathsf{G}}\backslash \mathsf{G}/\mathsf{H} = \Sph^1 $ and consists of integrally minimal orbits, then the $ \overline{\mathsf{G}} $-orbits are standard homogeneous spaces.
        \end{theorem}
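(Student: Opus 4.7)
The plan is to analyze the Einstein equation through the cohomogeneity-one structure and then extract the standardness condition from compactness of $\Sph^1$ together with the integral minimality hypothesis.

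First I would set up the standard cohomogeneity-one framework: pick a unit-speed closed geodesic $c : \Sph^1 \to \mathcal{M}$ meeting every $\overline{\mathsf{G}}$-orbit orthogonally, so that each orbit $\mathcal{O}_t = \overline{\mathsf{G}} \cdot c(t)$ is a homogeneous hypersurface with an induced $\overline{\mathsf{G}}$-invariant metric $g_t$. Let $L_t$ denote the Weingarten shape operator of $\mathcal{O}_t$ and $H(t) = \trace L_t$ its (constant) mean curvature. The Gauss--Codazzi--Ricci equations decompose $\mathrm{Ric}(g) = \lambda g$ into an ODE system on $\Sph^1$ for the pair $(g_t, L_t)$, with normal component of the form $\trace(L_t') + \trace(L_t^2) = -\lambda$ and tangential component relating $\mathrm{Ric}(g_t)$ to $L_t'$, $H(t) L_t$ and $L_t^2$.

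Next I would use the integrally minimal assumption to get global control. The compactness of $\Sph^1$ together with integral minimality means the mean curvature $H$ has vanishing average, so integrating the normal Einstein equation pins down $\int_{\Sph^1} \trace(L_t^2)\,dt$. At the same time, pulling the tangential equation back through $\overline{\mathsf{G}}$-invariance realizes $(g_t)$ as a curve of inner products on the fixed reductive complement $\overline{\mathfrak{m}}$ of $\overline{\mathfrak{h}}$ in $\overline{\mathfrak{g}}$, so the ODE becomes an autonomous flow on the moduli space of $\mathrm{Ad}(\overline{\mathsf{H}})$-invariant inner products, driven by the moment map of the $\mathsf{GL}(\overline{\mathfrak{m}})$-action on Lie brackets in the style of Heber and Lauret.

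Then I would run the Bochner/first-variation argument characteristic of Bohm--Lafuente. The scalar curvature $\mathrm{scal}(g_t)$ is bounded and $\Sph^1$ is compact, so $t \mapsto \mathrm{scal}(g_t)$ attains a critical point $t_0$; at $t_0$ a volume-normalized variation along $c$ has vanishing first variation. Combined with integral minimality and the scale-invariance of Perelman-type functionals used in Bohm--Lafuente, this forces the variation of $g_t$ at $t_0$ to be tangent to the Einstein locus, which by Lauret's characterization means the orthogonal complement of the nilradical of $\overline{\mathfrak{g}}$ is a subalgebra --- i.e. $(\overline{\mathsf{G}}/\overline{\mathsf{H}}, g_{t_0})$ is standard. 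Finally, $\overline{\mathsf{G}}$-equivariance together with the uniqueness part of the cohomogeneity-one ODE propagates the standardness property from $t_0$ to every other orbit.

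The main obstacle is the final step --- showing that the Heber--Lauret nilradical-orthogonality condition survives the ODE evolution along $\Sph^1$. A priori the shape operator $L_t$ could couple the nilpotent and reductive parts of $\overline{\mathfrak{g}}$ nontrivially under the flow, so one must argue that the Einstein plus integrally minimal constraints force $L_t$ to preserve the nilradical decomposition at every $t$. This is where the global topology of the orbit space $\Sph^1$ is used essentially: a non-trivial coupling would integrate to a monodromy obstruction around the circle, contradicting the closed-orbit hypothesis.
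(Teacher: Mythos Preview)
This theorem is not proved in the present paper. It is quoted verbatim as \cite{bohm_lafuente_2019}*{Theorem D} and used as a black box in the proof of Theorem~\ref{thm:sl2r_theorem}; the paper contains no argument for it whatsoever. So there is no ``paper's own proof'' to compare your proposal against.

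As for the proposal itself, it reads more like a narrative of plausible ingredients than an actual proof. Several steps are vague or dubious. The claim that a critical point of $t\mapsto \mathrm{scal}(g_t)$ forces standardness via ``Lauret's characterization'' is not justified: Lauret's result says Einstein solvmanifolds are standard, but at a generic $t_0$ the orbit $(\mathcal{O}_{t_0},g_{t_0})$ need not be Einstein, so you cannot invoke that characterization directly. The ``monodromy obstruction'' in the final paragraph is asserted rather than derived; nothing in the setup explains why a nontrivial coupling of nilpotent and reductive parts would fail to close up over $\Sph^1$. If you want to actually prove this statement you should consult \cite{bohm_lafuente_2019} directly, where the argument is carried out with the precise estimates on the shape operator and the Ricci endomorphism that your sketch gestures at but does not supply.
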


        Let $ \left( \mathsf{G}/\mathsf{H}, g \right) $ is as in the theorem above, with $ \overline{\mathsf{G}} $ an effective, cohomogeneity-one action on $ \mathsf{G}/\mathsf{H} $. Suppose $ \gamma: \Sph^1 \to \mathsf{G}/\mathsf{H} $ is a unit speed normal geodesic to the $ \overline{\mathsf{G}} $-orbits, and denote by $ \trace L_t $ the mean curvature of the hypersurfaces $ \Sigma_t = \overline{\mathsf{G}}\cdot\gamma(t) $. Then, we have the following
        \begin{definition}\label{def:integrally_minimal}
            In the notation above, we say that the orbits are \textit{integrally minimal} (in the sense of \cite{bohm_lafuente_2019}) if
            \[
                \int_{\Sph^1}\trace L_t \deint t = 0. 
            \]
        \end{definition}
        If $ \Sigma_t $ are all minimal hypersurfaces, then the orbits are integrally minimal. We say that $ \left( \mathsf{G}/\mathsf{H}, g, \overline{\mathsf{G}} \right) $ is \textit{orbit-Einstein} with negative Einstein constant $ \lambda < 0 $ if $ \mathrm{Ric}_g(X, X) = \lambda \cdot g(X, X) $ for all vectors $ X $ tangent to the orbits of $ \overline{\mathsf{G}} $. If $ (\mathsf{G}/\mathsf{H}, g) $ is Einstein with negative Einstein constant, then $ (\mathsf{G}/\mathsf{H}, g, \overline{\mathsf{G}}) $ is orbit-Einstein.

        Finally, let us recall the following formula for the Ricci curvature of a $ \mathsf{G} $-invariant metric $ g $, where $ \left\{ e_i \right\} $ is a $ g $-orthonormal basis for $ \mathfrak{m} $ \cite{besse_2008}*{(7.33)}:
        \begin{equation}\label{eq:ricci_formula}
            \begin{split}
                \mathrm{ric}(X, Y) =& -\tfrac12\sum_{ij}\iprod{\lb{X}{e_i}_\mathfrak{m}}{e_j}\iprod{\lb{Y}{e_i}_\mathfrak{m}}{e_j} + \tfrac14\sum_{ij}\iprod{\lb{e_i}{e_j}_\mathfrak{m}}{X}\iprod{\lb{e_i}{e_j}_\mathfrak{m}}{Y}\\
                &-\tfrac14\sum_i\iprod{\lb{X}{\lb{Y}{e_i}_\mathfrak{m}}_\mathfrak{m}}{e_i} - \tfrac14\sum_i\iprod{\lb{Y}{\lb{X}{e_i}_\mathfrak{m}}_\mathfrak{m}}{e_i}\\
                &-\tfrac12\sum_i\iprod{\lb{X}{\lb{Y}{e_i}_\mathfrak{h}}}{e_i}-\tfrac12\sum_i\iprod{\lb{Y}{\lb{X}{e_i}_\mathfrak{h}}}{e_i}\\
                &-\tfrac12\iprod{\lb{H}{X}_\mathfrak{m}}{Y} -\tfrac12\iprod{\lb{H}{Y}_\mathfrak{m}}{X} 
            \end{split}
        \end{equation}
        where $ \mathrm{H} $ is the mean curvature vector of $ (\mathsf{G}/\mathsf{H}, g) $. 

    }

    \section{Proof of Theorem \ref{thm:sl2r_theorem}}
    The purpose of this section is to prove Theorem \ref{thm:sl2r_theorem}. In particular, we prove that any Einstein metric on $ \mathsf{G}/\mathsf{H} $ is a Riemannian product of Einstein metrics. Thus the left invariant metric on the factor with Lie algebra isomorphic to $ \mathfrak{sl}_2(\R) $ is Einstein as well, which is impossible.
    
    Suppose $ (\mathcal{M}^n, g) $ is a simply-connected, non-compact homogeneous manifold presented as $ \mathsf{G}/\mathsf{H} $, with $ \mathsf{G} $ semisimple. Let $ \mathfrak{g}_0 $ be a simple ideal of $ \mathfrak{g} $ and $ \mathfrak{g}_1 $ the semisimple ideal complementary to $ \mathfrak{g}_0 $, and denote by $ \mathfrak{h}_0 \subseteq \mathfrak{g}_0 $ the projection of $ \mathfrak{h} $ onto the simple factor $ \mathfrak{g}_0 $ with respect to the splitting $ \mathfrak{g} = \mathfrak{g}_0 \oplus \mathfrak{g}_1 $. 
    \begin{lemma}\label{lem:trivial_projection_lemma}
        Under the above assumptions, if $ \mathfrak{h}_0 $ is a maximal compactly embedded subalgebra and $ \mathsf{Q}_0 $ a closed Lie subgroup of $ \mathsf{G} $ with $ \mathrm{Lie}(\mathsf{Q}_0) = \mathfrak{q}_0 \subseteq \mathfrak{g}_0 $ such that $ \mathfrak{g}_0 = \mathfrak{h}\oplus \mathfrak{q}_0 $ as vector spaces, then $ \mathsf{G}/\mathsf{H} $ is not a minimal presentation.
    \end{lemma}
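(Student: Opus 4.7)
The plan is to exhibit a closed proper Lie subgroup $ \mathsf{G}' \subset \mathsf{G} $ that still acts transitively on $ \mathsf{G}/\mathsf{H} $ with compact isotropy; this strictly smaller presentation then contradicts minimality. The natural candidate is obtained by replacing the simple factor $ \mathsf{G}_0 $ (the connected closed subgroup of $ \mathsf{G} $ integrating $ \mathfrak{g}_0 $) with the given subgroup $ \mathsf{Q}_0 $.

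First I would set $ \mathfrak{g}' := \mathfrak{q}_0 \oplus \mathfrak{g}_1 $. Since $ \mathfrak{q}_0 $ is a Lie subalgebra of $ \mathfrak{g}_0 $ and $ [\mathfrak{g}_0, \mathfrak{g}_1] = 0 $ (as $ \mathfrak{g}_0, \mathfrak{g}_1 $ are complementary ideals), $ \mathfrak{g}' $ is a Lie subalgebra of $ \mathfrak{g} $. Letting $ \mathsf{G}_1 \subseteq \mathsf{G} $ denote the closed normal subgroup with Lie algebra $ \mathfrak{g}_1 $, the commuting product $ \mathsf{G}' := \mathsf{Q}_0 \cdot \mathsf{G}_1 $ is a closed Lie subgroup of $ \mathsf{G} $ with Lie algebra $ \mathfrak{g}' $.

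The next step is to verify infinitesimal transitivity $ \mathfrak{h} + \mathfrak{g}' = \mathfrak{g} $. Given any $ h \in \mathfrak{h} $, decomposing $ h = h_0 + h_1 $ according to $ \mathfrak{g} = \mathfrak{g}_0 \oplus \mathfrak{g}_1 $ gives $ h_0 = h - h_1 \in \mathfrak{h} + \mathfrak{g}_1 \subseteq \mathfrak{h} + \mathfrak{g}' $, so $ \mathfrak{h}_0 \subseteq \mathfrak{h} + \mathfrak{g}' $. Combined with the hypothesis $ \mathfrak{g}_0 = \mathfrak{h}_0 \oplus \mathfrak{q}_0 $ and $ \mathfrak{g}_1 \subseteq \mathfrak{g}' $, this yields $ \mathfrak{g} = \mathfrak{g}_0 + \mathfrak{g}_1 \subseteq \mathfrak{h} + \mathfrak{g}' $. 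Since $ \mathsf{G}/\mathsf{H} $ is connected, the $ \mathsf{G}' $-orbit of $ e\mathsf{H} $ is both open (by infinitesimal transitivity) and closed, whence $ \mathsf{G}' $ acts transitively on $ \mathsf{G}/\mathsf{H} $.

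Finally, set $ \mathsf{H}' := \mathsf{G}' \cap \mathsf{H} $, which is closed in the compact group $ \mathsf{H} $ and therefore compact. Thus $ \mathsf{G}'/\mathsf{H}' \cong \mathsf{G}/\mathsf{H} $ is a presentation with compact isotropy, and a dimension count yields $ \dim \mathsf{G}' = \dim \mathsf{G} - \dim \mathfrak{h}_0 < \dim \mathsf{G} $, where $ \mathfrak{h}_0 \ne 0 $ holds since a real simple Lie algebra always has a nonzero compact part in its Cartan decomposition. This contradicts the minimality of $ \mathsf{G}/\mathsf{H} $. The main technical care is in confirming that $ \mathsf{Q}_0 \cdot \mathsf{G}_1 $ is genuinely closed in $ \mathsf{G} $ and that the Lie-algebraic transitivity correctly upgrades to a group-level transitive action; both are standard facts once one notes that $ \mathsf{Q}_0 $ centralises the closed normal subgroup $ \mathsf{G}_1 $ and appeals to connectedness of $ \mathsf{G}/\mathsf{H} $.
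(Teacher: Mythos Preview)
Your proof is correct and takes essentially the same route as the paper: both set $\overline{\mathfrak{g}} = \mathfrak{q}_0 \oplus \mathfrak{g}_1$ and deduce transitivity of the corresponding connected subgroup from the relation $\mathfrak{h} + \overline{\mathfrak{g}} = \mathfrak{g}$. The only differences are tactical---the paper verifies surjectivity of the infinitesimal action by a dimension count on $\ker\iota(\cdot)_p = \overline{\mathfrak{g}}\cap\mathfrak{h}$, whereas you check $\mathfrak{h}_0 \subseteq \mathfrak{h} + \mathfrak{g}'$ directly, pass to the group level via an open-and-closed orbit argument, and make explicit both the compactness of the new isotropy $\mathsf{H}' = \mathsf{G}'\cap\mathsf{H}$ and the strict inequality $\dim\mathsf{G}' < \dim\mathsf{G}$, which the paper leaves implicit.
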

    \begin{proof}
        Define $ \overline{\mathfrak{g}} = \mathfrak{q}_0\oplus \mathfrak{g}_1 $, and denote by $ \mathsf{Q}_0 $ and $ \overline{\mathsf{G}} $ the connected Lie subgroups of $ \mathsf{G} $ with Lie algebras $ \mathfrak{q}_0 $ and $ \overline{\mathfrak{g}} $ respectively. We claim that the action of $ \overline{\mathsf{G}} $ on $ \mathsf{G}/\mathsf{H} $ is transitive.

        It is sufficient to look at the infinitesimal generators of the action by \cite{audin_2004}, and show that the homomorphism $ \iota : \overline{\mathfrak{g}} \to \iota(\overline{\mathfrak{g}}) \subseteq \mathrm{Kill}(\mathsf{G}/\mathsf{H}) $ defined pointwise by 
        \[
            X \mapsto \iota(X)_p \coloneqq \diff{}{t}\Big\vert_{t = 0}\exp(tX)\cdot p,
        \]
        restricts to an epimorphism $ \iota(\cdot)_p : \overline{\mathfrak{g}} \to T_p\mathsf{G}/\mathsf{H} $ on every tangent space. Indeed, since $ T_p\mathsf{G}/\mathsf{H} \simeq \mathfrak{g}/\mathfrak{h} $, and $ \iota(\overline{\mathfrak{g}})_p \simeq \overline{\mathfrak{g}}/\ker\iota(\cdot)_p $ by the first isomorphism theorem, $ \iota(\cdot)_p $ is an epimorphism if and only if $ \dim\ker\iota(\cdot)_p = \dim\mathfrak{h} - (\dim\mathfrak{g} - \dim\overline{\mathfrak{g}}) $. Since

        \[
        	\overline{\mathsf{G}}_p \coloneqq \left\{ g \in \overline{\mathsf{G}} \;:\; g\cdot p = p \right\} = \left\{  g \in \mathsf{G} \;:\; g\cdot p = p\text{ and }g \in \overline{\mathsf{G}}  \right\} = \mathsf{H}\cap \overline{\mathsf{G}},
        \]

        by \cite{audin_2004}*{Theorem 1.1.1}, 
        \[
        	\ker \iota(\cdot)_p = \overline{\mathfrak{g}}_p = \mathrm{Lie}(\overline{\mathsf{G}}_p) = \overline{\mathfrak{g}}\cap \mathfrak{h}.
        \]
        Therefore, by the second isomorphism theorem, and the fact that $ \mathfrak{h} + \overline{\mathfrak{g}} = \mathfrak{g} $, we have
        \[
        	\dim \ker\iota(\cdot)_p = \dim(\overline{\mathfrak{g}}\cap \mathfrak{h}) = \dim\mathfrak{h} - (\dim\mathfrak{g} - \dim\overline{\mathfrak{g}}),
        \]
        and hence the action of $ \overline{\mathsf{G}} $ on $ \mathsf{G}/\mathsf{H} $ is transitive, so $ \mathsf{G}/\mathsf{H} $ is not a minimal presentation.
    \end{proof}

    Suppose now that $ \mathsf{G}/\mathsf{H} $ is a minimal presentation for $ \left( \mathcal{M}^n, g \right) $ in the above, with $ \mathfrak{g}_0 = \mathfrak{sl}_2(\R) $. Then, by Lemma \ref{lem:trivial_projection_lemma}, if $ \mathfrak{h}_0 $ is nontrivial, it is a maximal compactly embedded subalgebra. Then, any Borel subgroup $ \mathsf{B} = \mathsf{Q}_0 $ satisfies the conditions of the above Lemma, which is impossible. Hence, $ \mathfrak{h}_0 $ is trivial. 

    {
    If $ \mathcal{M} $ is simply connected, then we have the splitting $ \mathcal{M} = \widetilde{\mathsf{SL}_2(\R)}\times \mathsf{G}_1/\mathsf{H} $ as a homogeneous product. In order to apply \cite{bohm_lafuente_2019}*{Theorem D}, it is enough to prove a codimension 1 subgroup $ \overline{\mathsf{G}} $ of $ \mathsf{G} $ such that the cohomogeneity one action of $ \overline{\mathsf{G}} $ on $ \mathcal{M} $ has orbit space $ \Sph^1 $. If $ \mathfrak{b} $ is any Borel subalgebra of $ \mathfrak{sl}_2(\R) $ and $ \mathsf{B}\leq \widetilde{\mathsf{SL}_2(\R)} $ is the corresponding connected subgroup, then the orbit space of $ \overline{\mathsf{G}} = \mathsf{B} \times \mathsf{G}_1 $ in $ \mathcal{M} $ is non-compact. In order to apply \cite{bohm_lafuente_2019}*{Theorem D}, it is necessary to pass to the quotient of some subgroup of $ \mathsf{G} $ acting properly discontinuously on $ \mathcal{M} $.

    Suppose now that $ \mathcal{M} $ is not necessarily simply connected. Denote by $ \mathsf{G}_0 $ and $ \mathsf{G}_1 $ the connected Lie subgroups of $ \mathsf{G} $ corresponding to the Lie algebras $ \mathfrak{g}_0 = \mathfrak{sl}_2(\R) $ and $ \mathfrak{g}_1 $ respectively. Then, since $ \mathfrak{g} = \mathfrak{g}_0 \oplus \mathfrak{g}_1 $ is a decomposition into ideals, the intersection $ \mathsf{I} = \mathsf{G}_0 \cap \mathsf{G}_1 $ is a normal and discrete subgroup of $ \mathsf{G} $, and so $ \mathsf{I} $ is also central in $ \mathsf{G} $ \cite{onishchik_vinberg_1988}*{Proposition 4.6}. Therefore, $ \mathsf{\Lambda} = Z(\mathsf{G}_0)\mathsf{I} $ acts properly discontinuously on $ (\mathcal{M}, g) $ by isometries, so $ \mathsf{G}/\mathsf{H} $ is locally isometric to $ \mathsf{\Lambda}\backslash \mathsf{G}/\mathsf{H} \simeq \mathsf{PSL}_2(\R)\times (\mathsf{G}_1/\mathsf{H}') $, where $ \mathsf{H}' = \mathsf{H}\cap \mathsf{\Lambda} $. Since the Einstein condition is local, it suffices to prove Theorem \ref{thm:sl2r_theorem} in the case where $ \mathcal{M} $ is presented minimally as $ \mathsf{PSL}_2(\R)\times (\mathsf{G}_1/\mathsf{H}) $.

    }
    \begin{lemma}
        Let $ (\mathcal{M}^n, g) $ be a homogeneous Riemannian manifold with minimal presentation $ \mathsf{PSL}_2(\R) \times \mathsf{G}_1/\mathsf{H} $ as in the notation above. Let $ \mathfrak{q}_0 = \mathfrak{a}_0\oplus \mathfrak{n}_0\subseteq \mathfrak{sl}_2(\R) $ be a Borel subalgbera with matching Iwasawa decomposition $ \mathfrak{sl}_2(\R) = \mathfrak{k}_0\oplus \mathfrak{a}_0\oplus \mathfrak{n}_0 $, and denote $ \overline{\mathfrak{g}} = \mathfrak{q}_0 \oplus \mathfrak{g}_1 $ as in the above. Then
        \begin{enumerate}
            \item $ \overline{\mathsf{G}} $ is closed in $ \mathsf{G} $;
            \item the action of $ \overline{\mathsf{G}} $ on $ \mathcal{M} $ is effective, and of cohomogeneity-one;
            \item the orbits are integrally minimal;
            \item $ \overline{\mathsf{G}}\backslash\mathcal{M} = \Sph^1 $.
        \end{enumerate}
    \end{lemma}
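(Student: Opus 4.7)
The four parts split into routine structural claims ((1), (2), (4)) and a substantive geometric claim (3). My overall plan is to reduce (1), (2), (4) to properties of the Iwasawa decomposition of $ \mathsf{PSL}_2(\R) $, and to prove (3) via a first-variation/periodicity argument on $ \Sph^1 $.

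For (1), $ \mathsf{Q}_0 $ is closed in $ \mathsf{PSL}_2(\R) $ as the solvable factor of an Iwasawa decomposition, so $ \overline{\mathsf{G}} = \mathsf{Q}_0 \times \mathsf{G}_1 $ is closed in $ \mathsf{G} $. For (4), because $ \mathsf{H} \subseteq \mathsf{G}_1 \subseteq \overline{\mathsf{G}} $, the orbit space $ \overline{\mathsf{G}} \backslash \mathcal{M} $ identifies with $ \mathsf{Q}_0 \backslash \mathsf{PSL}_2(\R) $, and via the Iwasawa decomposition $ \mathsf{PSL}_2(\R) = \mathsf{Q}_0 \mathsf{K}_0 $ together with $ \mathsf{Q}_0 \cap \mathsf{K}_0 = \{ e \} $, the map $ k \mapsto \mathsf{Q}_0 k $ is a diffeomorphism $ \mathsf{K}_0 \simeq \overline{\mathsf{G}} \backslash \mathcal{M} \simeq \Sph^1 $. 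For (2), a dimension count using $ \mathsf{H} \subseteq \overline{\mathsf{G}} $ yields that each $ \overline{\mathsf{G}} $-orbit has codimension one in $ \mathcal{M} $, which combined with (4) gives cohomogeneity-one. Effectiveness follows since the ineffective kernel of the $ \overline{\mathsf{G}} $-action is contained in the (central, discrete) ineffective kernel of the full $ \mathsf{G} $-action, and the latter is trivial after the passage to the minimal presentation $ \mathsf{PSL}_2(\R) \times (\mathsf{G}_1/\mathsf{H}') $ carried out above, using $ Z(\mathsf{PSL}_2(\R)) = \{e\} $.

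The substantive claim is (3), which I address by realising $ \trace L_t $ as the negative logarithmic derivative of a positive smooth function on $ \Sph^1 $. Parameterise $ \Sph^1 $ by a unit-speed normal geodesic $ c : \R \to \mathcal{M} $ of period $ L $; since $ \overline{\mathsf{G}} $ acts transitively on each orbit $ \Sigma_t = \overline{\mathsf{G}}\cdot c(t) $ by isometries, $ \trace L_t $ is constant on $ \Sigma_t $ and descends to a function $ H : \Sph^1 \to \R $. Letting $ V(t) $ denote the Jacobian along $ c $ of the normal exponential map from $ \Sigma_0 $, the first-variation identity gives $ V'(t) = -H(t) V(t) $, so $ \int_0^L H(t)\, dt = \log V(0) - \log V(L) $. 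Because $ c(L) $ lies in the orbit of $ c(0) $, there exists $ \phi \in \overline{\mathsf{G}} $ with $ c(L) = \phi \cdot c(0) $; since the orthogonal complement of the orbit tangent space at $ e\mathsf{H} $ is a one-dimensional (hence trivial) $ \mathrm{Ad}(\mathsf{H}) $-submodule of $ \mathfrak{m} $, translating by $ \mathsf{G} $ produces a globally defined $ \mathsf{G} $-invariant unit normal field, forcing $ \phi_\ast c'(0) = c'(L) $. Then $ \phi $ is an isometry identifying the transported frame at $ c(L) $ with that at $ c(0) $, so $ V(L) = V(0) $ and $ \int_0^L H\, dt = 0 $.

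The main obstacle I anticipate is precisely the orientation claim above: verifying that the return map $ \phi \in \overline{\mathsf{G}} $ is orientation-preserving on the normal bundle, so that $ V $ descends to a single-valued positive function on $ \Sph^1 $. Once the $ \mathrm{Ad}(\mathsf{H}) $-triviality of the normal direction gives a global $ \mathsf{G} $-invariant unit normal, the remaining steps — first variation and periodicity — are standard.
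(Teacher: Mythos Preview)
Your treatment of (1), (2), (4) matches the paper's and is fine. The difference, and the difficulty, is entirely in (3).

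The paper does \emph{not} argue via periodicity. It proves the stronger fact that every orbit is a \emph{minimal} hypersurface: replacing $N_{\gamma(t)}$ by a Killing field $N^*\in\mathfrak m$ with the same value at $\gamma(t)$, the Koszul formula for Killing fields gives $g_t(L_tX,X)=-g([N^*,X],X)_{\gamma(t)}$, and summing over an orthonormal basis $\{U_i\}$ of $\overline{\mathfrak m}$ together with $[\mathfrak m,\mathfrak h]\subseteq\mathfrak m$ yields $\trace L_t=-\trace(\ad_{N^*}|_{\mathfrak g})=0$ because $\mathfrak g$ is semisimple (hence unimodular). Thus $\trace L_t\equiv 0$, which is strictly stronger than integral minimality.

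Your first-variation argument has a genuine gap at the step ``$V(L)=V(0)$.'' Two issues. First, the vector field you build by left-translating $N_0\in\mathfrak m$ under $\mathsf G$ is a well-defined $\mathsf G$-invariant unit field, but it is \emph{not} normal to the $\overline{\mathsf G}$-orbits away from $\Sigma_0$: at $k\cdot o$ with $k\in\mathsf K_0\setminus\{e\}$ the orbit tangent space identifies with $(\mathrm{Ad}(k^{-1})\overline{\mathfrak g})_{\mathfrak m}\neq\overline{\mathfrak m}$, so there is no globally $\mathsf G$-invariant unit \emph{normal} field. Second, and more seriously, even after one corrects to the (genuinely $\overline{\mathsf G}$-invariant) unit normal $\nu$ and obtains $\phi_*c'(0)=c'(L)$, the existence of the isometry $\phi\in\overline{\mathsf G}$ with $\phi\cdot c(0)=c(L)$ does not force $V(L)=V(0)$. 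The relevant map is the holonomy $\Phi_L=\exp_\perp(\cdot,L):\Sigma_0\to\Sigma_0$, which is $\overline{\mathsf G}$-equivariant and hence acts as right translation by some $n\in N_{\overline{\mathsf G}}(\mathsf H)$; its Jacobian at $o$ equals $|\det\mathrm{Ad}(n^{-1})|_{\overline{\mathfrak m}}|$, i.e.\ the modular function of $\overline{\mathsf G}$ at $n$ (up to the compact factor $\mathsf H$). Since $\overline{\mathsf G}=\mathsf Q_0\times\mathsf G_1$ with $\mathsf Q_0$ the non-unimodular $ax+b$ group, there is no a priori reason this is $1$. Left translation by $\phi$ is volume-preserving, but $\Phi_L$ is \emph{not} left translation by $\phi$; the two maps agree only at the single point $c(0)$.

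The missing ingredient is exactly unimodularity of the ambient $\mathfrak g$, and the clean way to inject it is the paper's: compute $\trace L_t$ directly and kill it with $\trace\ad_{N^*}=0$.
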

    \begin{proof}
        Since $ \mathsf{Q}_0 $ and $ \mathsf{G}_1 $ are both closed, $ \overline{\mathsf{G}} = \mathsf{Q}_0\times \mathsf{G}_1 $ is closed. Moreover, the effective action of $ \mathsf{G} $ on $ \mathcal{M} $ restricts to an effective action of $ \overline{\mathsf{G}} $ on $ \mathcal{M} $.

        We claim that $ \overline{\mathsf{G}}\backslash\mathcal{M} = \Sph^1 $. Indeed, since the center of $ \mathsf{PSL}_2(\R) $ is trivial, $ \mathsf{K}_0 $ is compact by \cite{helgason_1978}*{Chapter 6, Theorem 1.1}, and so $ \mathsf{K}_0 \simeq \Sph^1 $. Writing $ \mathsf{G} = (\mathsf{K}_0 \mathsf{Q}_0) \times \mathsf{G}_1 $, we have that every element in $ \mathcal{M} $ is written uniquely as $ k\cdot(q, g_1)\mathsf{H} = (kq, g_1)\mathsf{H} $ for some $ k \in \mathsf{K}_0 $, $ q \in \mathsf{Q}_0 $, and $ g_1 \in \mathsf{G}_1 $. That is, $ \mathcal{M} \simeq \mathsf{K}_0(\overline{\mathsf{G}}/\mathsf{H}) $, so the orbits of the $ \overline{\mathsf{G}} $ action on $ \mathcal{M} $ are parametrised by the action of $ \mathsf{K}_0 $ on $ \overline{\mathsf{G}}/\mathsf{H} $, so $ \mathcal{M}\backslash \overline{\mathsf{G}} = \mathsf{K}_0 = \Sph^1 $.

        It remains to show integral minimality in the sense of \cite{bohm_lafuente_2019}. We will show more generally that the $ \overline{\mathsf{G}} $-orbits are minimal hypersurfaces. Let $ \gamma : \Sph^1 \to \mathcal{M} $ be a unit speed normal geodesic to the orbits of the $ \overline{\mathsf{G}} $-action, and set $ \Sigma_t = \overline{\mathsf{G}}\cdot \gamma(t) $. Let $ N $ denote the unit normal vector field to each $ \Sigma_t $, satisfying $ \gamma'(t) = N_{\gamma(t)} $ for all $ t \in \Sph^1 $, and $ X $ denote a vector field tangent to the $ \overline{\mathsf{G}} $-orbits. Now, locally we have that $ g(X, N) = 0 $, and so $ g(\nabla_X N,X) = -g(N, \nabla_X X) $.
        
        Following the same construction as \cite{bohm_lafuente_2019}*{\S1}, the shape operator of $ \Sigma_t $, denoted $ L_t \in \mathrm{End}(T\Sigma_t) $ satisfies
        \[
        	g_t(L_tX, Y) = -g(\nabla_X N, Y)_{\gamma(t)} = g(N, \nabla_X Y)_{\gamma(t)}
        \]
        for Killing fields $ X, Y \in \overline{\mathfrak{g}} $, where $ g_t $ is the induced metric on the submanifold $ \Sigma_t $ of $ \mathcal{M} $. Replacing $ N $ with a Killing field $ N^* \in \mathfrak{g} $ with the same value as $ N $ at $ \gamma(t) $, the Koszul formula for Killing fields \cite{besse_2008}*{(7.27)} implies that for all Killing fields $ X \in \mathfrak{g} $,
        \begin{align*}
            g_t(L_tX, X) & = g(N^*, \nabla_X X)_{\gamma(t)}\\
            & = \tfrac12\left( g(N^*, \lb{X}{X})_{\gamma(t)} + g(X, \lb{X}{N^*})_{\gamma(t)} + g(X, \lb{X}{N^*})_{\gamma(t)} \right)\\
            & = -g(\lb{N^*}{X}, X)_{\gamma(t)}
        \end{align*}
        Let $ \mathfrak{g} = \mathfrak{h}\oplus \mathfrak{m} $ and $ \mathfrak{g}_1 = \mathfrak{h}\oplus \mathfrak{m}_1 $ be the canonical reductive decompositions for $ \mathsf{G}/\mathsf{H} $ and $ \mathsf{G}_1/\mathsf{H} $, and denote by $ \overline{\mathfrak{m}} = \mathfrak{q}_0\oplus \mathfrak{m}_1 = T_{\gamma(t)}\Sigma_t $. By adding elements from the isotropy, we may assume that $ N^* \in \mathfrak{m} $. Suppose $ \{U_i\}_{i = 1}^{n-1} $ is an orthonormal basis for $ \overline{\mathfrak{m}} $. Then $ \left\{ U_i \right\}\cup \left\{ N^* \right\}  $ is an orthonormal basis for $ \mathfrak{m} $. If $ \left\{ V_i \right\} $ is an orthonormal basis for $ \mathfrak{h} $, then
        \begin{align*}
            0 = -\trace\ad_{N^*} &= -\sum_ig(\lb{N^*}{U_i}, U_i)_{\gamma(t)} - \sum_j g(\lb{N^*}{V_j}, V_j)_{\gamma(t)}\\
            & = \sum_ig(\lb{N^*}{U_i},U_i)_{\gamma(t)} = \trace L_t
        \end{align*}
        since $ \lb{\mathfrak{m}}{\mathfrak{h}} \subseteq \mathfrak{m} $, and the fact that $ \mathfrak{sl}_2(\R) $ is unimodular. Hence, $ \Sigma_t $ is a minimal hypersurface.
    \end{proof}
    


    { 
        Suppose now that $ (\mathcal{M}, g) $ is in addition Einstein with negative Einstein constant. Then, by Theorem \ref{thm:bohm_laf_2019_thm_d}, the cohomogeneity-one $ \overline{\mathsf{G}} $-orbits are standard homogeneous spaces. 
    }

    \begin{lemma}\label{lem:borel_orthogonality}
        In the notation above and of Definition \ref{def:standard}, if the $ \overline{\mathsf{G}} $-orbits are standard, then $ \mathfrak{g}_1 $ is $ \hat{g} $-orthogonal to the nilradical of $ \mathfrak{q}_0 $.
    \end{lemma}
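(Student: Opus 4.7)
The plan is to identify the nilradical of $\overline{\mathfrak{g}}$ explicitly, use the standardness hypothesis to produce a codimension-one Lie subalgebra of $\overline{\mathfrak{g}}$, and then invoke the classical fact that a semisimple Lie algebra admits no codimension-one subalgebra.

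The first step is a short computation of $\nilrad(\overline{\mathfrak{g}})$. Since $\overline{\mathfrak{g}} = \mathfrak{q}_0 \oplus \mathfrak{g}_1$ is a direct sum of ideals, the nilradical distributes as $\nilrad(\overline{\mathfrak{g}}) = \nilrad(\mathfrak{q}_0) \oplus \nilrad(\mathfrak{g}_1)$. The second summand vanishes since $\mathfrak{g}_1$ is semisimple, while the first equals $\mathfrak{n}_0$ because $\mathfrak{q}_0 = \mathfrak{a}_0 \oplus \mathfrak{n}_0$ is the two-dimensional non-abelian solvable Lie algebra, whose nilradical is its one-dimensional derived ideal $\mathfrak{n}_0$. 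Thus $\nilrad(\overline{\mathfrak{g}}) = \mathfrak{n}_0$.

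Next, by Definition \ref{def:standard}, the standardness of the $\overline{\mathsf{G}}$-orbits forces $V \coloneqq \mathfrak{n}_0^{\perp}$, taken with respect to $\hat{g}$, to be a Lie subalgebra of $\overline{\mathfrak{g}}$, necessarily of codimension one. Since the desired orthogonality $\mathfrak{g}_1 \perp \mathfrak{n}_0$ is equivalent to the containment $\mathfrak{g}_1 \subseteq V$, I would set $W \coloneqq V \cap \mathfrak{g}_1$ and note that, as an intersection of subalgebras, $W$ is itself a subalgebra of $\mathfrak{g}_1$ of codimension $0$ or $1$. It only remains to rule out the codimension-one case.

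The main obstacle is thus to show that no semisimple Lie algebra admits a codimension-one subalgebra. I plan to argue this directly via the Killing form $B$ of $\mathfrak{g}_1$. A hypothetical codimension-one subalgebra $W \subseteq \mathfrak{g}_1$ has a one-dimensional $B$-orthogonal complement $\R Y$; ad-invariance of $B$ together with $W$ being a subalgebra yields $B(w', [w, Y]) = 0$ for all $w, w' \in W$, so $[W, Y] \subseteq \R Y$. Combined with $[Y, Y] = 0$ and $\mathfrak{g}_1 = W + \R Y$, this gives $[\mathfrak{g}_1, Y] \subseteq \R Y$, making $\R Y$ a one-dimensional abelian ideal of $\mathfrak{g}_1$ and contradicting semisimplicity. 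Consequently $W = \mathfrak{g}_1$, i.e., $\mathfrak{g}_1 \subseteq V$, which is precisely the required orthogonality.
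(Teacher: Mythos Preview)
Your argument contains a genuine error: the claim that ``no semisimple Lie algebra admits a codimension-one subalgebra'' is false. The Borel subalgebra $\mathfrak{b} = \Span\{h,e\}$ of $\mathfrak{sl}_2(\R)$ is a two-dimensional subalgebra of a three-dimensional simple Lie algebra. Your Killing-form argument breaks precisely at the assertion ``$\mathfrak{g}_1 = W + \R Y$'': the $B$-orthogonal complement $\R Y$ of $W$ need not be transverse to $W$. In the $\mathfrak{sl}_2$ example one computes $W^{\perp_B} = \R e \subseteq W$, so $W + \R Y = W \neq \mathfrak{g}_1$; and indeed $\R e$ is \emph{not} an ideal of $\mathfrak{sl}_2$ since $[f,e] = -h$. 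Thus the final step, promoting $\R Y$ to an ideal of $\mathfrak{g}_1$, fails exactly in the degenerate case $B(Y,Y)=0$, which does occur.

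The paper's proof avoids this altogether by working with derived series rather than orthogonal complements inside $\mathfrak{g}_1$. Writing $\mathfrak{u} = \mathfrak{n}_0^{\perp}$ (your $V$), standardness gives $\overline{\mathfrak{g}} = \mathfrak{u}\oplus\mathfrak{n}_0$ with $\mathfrak{u}\simeq\overline{\mathfrak{g}}/\mathfrak{n}_0$ reductive; one then observes that $[\mathfrak{u},\mathfrak{u}]$ is semisimple and commutes with $\mathfrak{n}_0$, so that taking two derived steps forces $\mathfrak{g}_1 = \overline{\mathfrak{g}}^{(2)} \subseteq \mathfrak{u}$ because $\mathfrak{q}_0$ is $2$-step solvable. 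This uses the position of $\mathfrak{g}_1$ inside $\overline{\mathfrak{g}}$ rather than any putative nonexistence of codimension-one subalgebras, and is what you should replace your last paragraph with.
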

    \begin{proof}
        It is sufficient to prove that $ \mathfrak{g}_1 \subseteq \mathfrak{u} \coloneqq \mathfrak{n}_0^\perp $, the orthogonal complement of $ \mathrm{nilrad}(\mathfrak{g}) = \mathfrak{n}_0 $ under the $ \mathrm{Ad}(\mathsf{H}) $-invariant extension of the inner product $ g $ to $ \mathfrak{g} $. Since the $ \overline{\mathsf{G}} $-orbits are standard, $ \overline{\mathfrak{g}} = \mathfrak{u} \oplus \mathfrak{n}_0 $ is a Lie algebra direct sum. Moreover, $ \mathfrak{u} \simeq \overline{\mathfrak{g}}/\mathfrak{n}_0 $ is reductive by \cite{varadarajan_1984}*{Theorem 3.16.3}, and so $ \mathfrak{u}^{(1)} =\lb{\mathfrak{u}}{\mathfrak{u}} $ is semisimple, and
        \begin{equation}\label{eq:g_commutation_relations}
            \overline{\mathfrak{g}}^{(1)} = \lb{\overline{\mathfrak{g}}}{\overline{\mathfrak{g}}}\subseteq \mathfrak{n}_0\oplus \lb{\mathfrak{u}}{\mathfrak{u}} = \mathfrak{n}_0\oplus \mathfrak{u}^{(1)}.
        \end{equation}
        
        Now, the bracket $ \lb{\mathfrak{u}^{(1)}}{\mathfrak{n}_0} $ defines $ \mathfrak{n}_0 $ as a trivial 1-dimensional $ \mathfrak{u}^{(1)} $-representation, since $ \mathfrak{u}^{(1)} $ is semisimple. Hence, $ \mathfrak{n}_0 \oplus \mathfrak{u}^{(1)} $ is a Lie algebra direct sum. Also, since $ \overline{\mathfrak{g}} = \mathfrak{q}_0 \oplus \mathfrak{g}_1 $ is a decomposition into ideals and $ \mathfrak{q}_0 $ is at most 2-step solvable,
        \[
        	\mathfrak{g}_1 = \overline{\mathfrak{g}}^{(2)} \overset{\eqref{eq:g_commutation_relations}}{\subseteq} \lb{\mathfrak{n}_0\oplus \mathfrak{u}^{(1)}}{\mathfrak{n}_0\oplus \mathfrak{u}^{(1)}} \subseteq \mathfrak{u}^{(2)} \subseteq \mathfrak{u},  
        \]
        as required.
    \end{proof}

    \begin{proof}[Proof of Theorem \ref{thm:sl2r_theorem}]
        As in the above notation, let $ \left( \mathcal{M}, g \right) $ be a simply-connected homogeneous Einstein manifold of negative Ricci curvature, with minimal presentation $ \mathsf{PSL}_2(\R)\times \left( \mathsf{G}_1/\mathsf{H} \right) $ such that $ \mathsf{G}_1 $ is semisimple. We seek three Borel subalgebras of $ \mathfrak{g}_0 = \mathfrak{sl}_2(\R) $ whose nilradicals form a basis. In the basis $ \left\{ \hat h, \hat e, \hat f \right\} $ of $ \mathfrak{sl}_2(\R) $, with canonical commutation relations
        \[
        	\lb ef = h, \quad\lb he = 2e, \quad\lb hf = -2f,
        \]
        three such Borel subalgebras take the form
        \[
        	\mathfrak{q}_0^1 = \Span\left\{ h, e \right\} ,\quad \mathfrak{q}_0^2 = \Span\left\{ h, f \right\},\quad \mathfrak{q}_0^3 = \Span\left\{ e + f, e - f + h \right\}.
        \]
        Applying Lemma \ref{lem:borel_orthogonality} on each $ \mathfrak{q}_0^i $, we see that $ \mathfrak{sl}_2(\R) \perp \mathfrak{g}_1 $. Therefore, $ \mathsf{G}/ \mathsf{H} = \mathsf{PSL}_2(\R) \times \mathsf{G}_1 / \mathsf{H} $ splits as a Riemannian product, and hence the Einstein equations splits into the Einstein equation on the $ \mathsf{PSL}_2(\R) $ and $ \mathsf{G}_1/\mathsf{H} $ factors. But it is well known (for example \cite{milnor_1976}) that $ \mathsf{PSL}_2(\R) $ admits no left invariant Einstein metrics of negative Einstein constant. 
    \end{proof}

    \section{Classification of unresolved spaces}\label{sec:classification}
    The main purpose of this section is to provide a classification of the remaining cases necessary to prove Theorem \ref{thm:main_alek_theorem}. Recall that $ \mathcal{M}^n = \mathsf{G}/\mathsf{H} $ is a simply connected, (almost) effective minimal presentation of a homogeneous space, with $ \mathsf{G} $ and $ \mathsf{H} $ connected and $ \mathsf{H} $ compact. Assume in addition that $ \left( \mathcal{M}, g \right) $ is an Einstein manifold of negative Ricci curvature, of dimension at most 10. Then, by \citelist{\cite{arroyo_lafuente_2016}\cite{bohm_lafuente_2019}\cite{jablonski_petersen_2017}\cite{jensen_1969}\cite{lafuente_lauret_2014}\cite{nikonorov_2005}} and references therein, we may assume $ \dim \mathcal{M} $ is 9 or 10. Moreover, we may assume without loss of generality that for the minimal presentation $ \mathsf{G}/\mathsf{H} $ for $ \mathcal{M} $, $ \mathsf{G} $ is semisimple, and has no compact simple factors. By Theorem \ref{thm:sl2r_theorem}, $ \mathfrak{g} $ has no ideal isomorphic to $ \mathfrak{sl}_2(\R) $. We may also assume $ \mathsf{G}/\mathsf{H} $ is not an irreducible symmetric spaces, since these are all diffeomorphic to simply-connected solvmanifolds by \cite{helgason_1978}*{Chapter 6, Theorem 5.1}. 

    \begin{definition}
        Let $ \mathsf{G}/\mathsf{H} $ be a homogeneous space in minimal presentation. If $ \mathfrak{g} = \mathrm{Lie}(\mathsf{G}) $ is semisimple, we call $ \mathsf{G}/\mathsf{H} $ a semisimple homogeneous space.
    \end{definition}

    In order to obtain our classification, we use the partial duality between compact semisimple homogeneous spaces and semisimple homogeneous spaces without compact simple factors, in the sense of \citelist{\cite{alekseevsky_2012}\cite{nikonorov_2005}} to construct a list of all semisimple homogeneous spaces without compact simple factors. We will detail this procedure below:

    Let $ \widetilde{\mathsf{G}}/\widetilde{\mathsf{H}} $ be a compact semisimple homogeneous space, with $ \widetilde{\mathfrak{g}} = \mathrm{Lie}(\widetilde{\mathsf{G}}) = \widetilde{\mathfrak{g}} = \widetilde{\mathfrak{g}}_1\oplus \cdots \oplus \widetilde{\mathfrak{g}}_r $ for $ \widetilde{\mathfrak{g}}_i $ simple, and denote by $ \widetilde{\mathfrak{h}}_i $ the projection of the isotropy subalgebra onto $ \widetilde{\mathfrak{g}}_i $ in the decomposition. Suppose $ (\widetilde{\mathfrak{g}}, \mathfrak{k}) $ is a symmetric pair with $ \mathfrak{k} $ containing $ \widetilde{\mathfrak{h}} $. Denote $ \mathfrak{k} = \mathfrak{k}_1\oplus \cdots \oplus \mathfrak{k}_r $ with each $ \mathfrak{k}_i $ the projection of $ \mathfrak{k} $ onto $ \widetilde{\mathfrak{g}}_i $ as before. Then, each $ (\widetilde{\mathfrak{g}_i}, \mathfrak{k}_i) $ is an irreducible symmetric space of the compact type. For each $ i $, denote by $ \mathfrak{g}_i $ the corresponding non-compact symmetric dual of $ \widetilde{\mathfrak{g}_i} $, and $ \mathfrak{h}_i $ the corresponding subalgebra isomorphic to $ \widetilde{\mathfrak{h}}_i $ contained in $ \mathfrak{g}_i $. Then, the Lie algebra $ \mathfrak{g} = \mathfrak{g}_1 \oplus \cdots \oplus \mathfrak{g}_r $ gives rise to a simply-connected Lie group $ \mathsf{G} $ and a connected subgroup $ \mathsf{H} $ such that $ \mathfrak{h} = \mathrm{Lie}(\mathsf{H}) $. The corresponding homogeneous space $ \mathsf{G}/\mathsf{H} $ has no compact simple factors. Therefore, the classification problem reduces to finding all possible subalgebras of $ \mathfrak{k} $ for each symmetric pair of the compact type, $ (\widetilde{\mathfrak{g}}, \mathfrak{k}) $ with $ \dim\mathsf{G}/\mathsf{K} \leq 10 $ \cite{nikonorov_2005}.
    
    \begin{definition}\label{def:auspace}
        We call a minimally presented homogeneous space $ \mathsf{G}/\mathsf{H} $ \textit{Alekseevskii Unresolved} (AU) if it is simply-connected, non-compact, non-symmetric, non-product, with $ \mathsf{G} $ semisimple without compact simple factors, and without $ \mathfrak{sl}_2(\R) $ ideals in $ \mathfrak{g} $. Analogously, if $ \mathsf{G}/\mathsf{H} $ is simply-connected, compact, non-symmetric, non-product, semisimple without $ \mathfrak{su}(2) $ ideals in $ \mathfrak{g} $, then we say that $ \mathsf{G}/\mathsf{H} $ is AU*.
    \end{definition}

    Using \cite{besse_2008}, we have the following list of symmetric spaces of the compact type in dimensions at most 10. We ignore those spaces with a factor of $ \mathsf{SU}(2)/\mathsf{U}(1) $, since the non-compact duals of any homogeneous space generated from these are not Einstein by Theorem \ref{thm:sl2r_theorem}. Furthermore, we ignore the Lie group cases, since our methods are not equipped to deal with their non-compact duals.

    {\scriptsize
	\begin{table}[h]
        \begin{tabular}{*3c}\toprule
			$ \dim \quotient{\mathsf{G}}{\mathsf{K}} $ & Symmetric Space & $ \dim\mathsf{G} $\\\midrule

            \multirow{1}{*}{3}
            & $ \quotient{\mathsf{SU(2)\times\mathsf{SU}(2)}}{\Delta\mathsf{SU}(2)} $&6\\\midrule
			
            \multirow{2}{*}{4}
			& $ \quotient{\mathsf{SU}(3)}{\mathsf{U}(2)} $&8\\
			& $ \quotient{\mathsf{Sp}(2)}{\mathsf{Sp}(1)^{2}} $&10\\\midrule
			
			\multirow{2}{*}{5}
			& $ \quotient{\mathsf{SU}(3)}{\mathsf{SO}(3)} $&8\\
			& $ \quotient{\mathsf{SU}(4)}{\mathsf{Sp}(2)} $&15\\\midrule

            \multirow{4}{*}{6}
			& $ \quotient{\mathsf{SU}(4)}{\mathsf{U}(3)} $&15\\
			& $ \mathbb{S}^{6}\simeq \quotient{\mathsf{SO}(7)}{\mathsf{SO}(6)} $ & 21\\
			& $ \quotient{\mathsf{Sp}(2)}{\mathsf{U}(2)} $ & 10\\
			& $ \left(\quotient{\mathsf{SU}(2)\times \mathsf{SU}(2)}{\Delta\mathsf{SU}(2)}\right)^{2} $&12\\\midrule
			
			\multirow{3}{*}{7}
			& $ (\quotient{\mathsf{SU}(3)}{\mathsf{U}(2)})\times(\quotient{\mathsf{SU}(2)\times\mathsf{SU}(2)}{\Delta\mathsf{SU}(2)}) $&14\\
			& $ (\quotient{\mathsf{Sp}(2)}{\mathsf{Sp}(1)^{2}})\times (\quotient{\mathsf{SU}(2)\times \mathsf{SU}(2)}{\Delta\mathsf{SU}(2)}) $&16\\
            & $ \mathbb{S}^{7} \simeq \quotient{\mathsf{SO}(8)}{\mathsf{SO}(7)} $&28\\\midrule
			
			\multirow{12}{*}{8}
			& $ (\quotient{\mathsf{SU}(3)\times \mathsf{SU}(3)}{\Delta\mathsf{SU}(3)}) $&16\\
			& $ (\quotient{\mathsf{SU}(3)}{\mathsf{SO}(3)})\times (\quotient{\mathsf{SU(2)\times \mathsf{SU}(2)}}{\Delta\mathsf{SU}(2)}) $&14 \\
			& $ (\quotient{\mathsf{SU}(4)}{\mathsf{Sp}(2)})\times (\quotient{\mathsf{SU}(2)\times \mathsf{SU}(2)}{\Delta\mathsf{SU}(2)}) $&21\\
			& $ (\quotient{\mathsf{SU}(3)}{\mathsf{U}(2)})^{2} $&16\\
			& $ (\quotient{\mathsf{SU}(3)}{\mathsf{U}(2)})\times (\quotient{\mathsf{Sp}(2)}{\mathsf{Sp}(1)^{2}}) $&18\\
			& $ (\quotient{\mathsf{Sp}(2)}{\mathsf{Sp}(1)^{2}})^{2} $&20\\
			& $ \quotient{\mathsf{SU}(3)\times \mathsf{SU}(3)}{\Delta\mathsf{SU}(3)} $&16\\
			& $ \quotient{\mathsf{SU}(4)}{\mathsf{S}(\mathsf{U}(2)^{2})} $&15\\
			& $ \quotient{\mathsf{SU}(5)}{\mathsf{U}(4)} $&24\\
			& $ \quotient{\mathsf{Sp}(3)}{\mathsf{Sp}(2)\mathsf{Sp}(1)} $&21\\
			& $ \quotient{\mathsf{G}_{2}}{\mathsf{SU}(2)^{2}} $&14\\
			& $ \mathbb{S}^{8} \simeq\quotient{\mathsf{SO}(9)}{\mathsf{SO}(8)} $&36\\\midrule
			
			\multirow{10}{*}{9}
			&$ (\quotient{\mathsf{Sp}(2)}{\mathsf{U}(2)}) \times (\quotient{\mathsf{SU}(2)\times \mathsf{SU}(2)}{\Delta\mathsf{SU}(2)}) $&16\\
			&$ (\quotient{\mathsf{SO}(7)}{\mathsf{SO}(6)}) \times (\quotient{\mathsf{SU}(2)\times \mathsf{SU}(2)}{\Delta\mathsf{SU}(2)}) $&27\\
			&$ (\quotient{\mathsf{SU}(4)}{\mathsf{U}(3)}) \times (\quotient{\mathsf{SU}(2)\times \mathsf{SU}(2)}{\Delta\mathsf{SU}(2)}) $&21\\
			&$ (\quotient{\mathsf{SU}(3)}{\mathsf{SO}(3)})\times(\quotient{\mathsf{SU}(3)}{\mathsf{U}(2)}) $&16\\
			&$ (\quotient{\mathsf{SU}(3)}{\mathsf{SO}(3)})\times(\quotient{\mathsf{Sp}(2)}{\mathsf{Sp}(1)^{2}}) $&18\\
			&$ (\quotient{\mathsf{SU}(4)}{\mathsf{Sp}(2)}) \times (\quotient{\mathsf{SU}(3)}{\mathsf{U}(2)})$&23\\
			&$ (\quotient{\mathsf{SU}(4)}{\mathsf{Sp}(2)}) \times(\quotient{\mathsf{Sp}(2)}{\mathsf{Sp}(1)^{2}})$&25\\
			&$ (\quotient{\mathsf{SU}(2)\times \mathsf{SU}(2)}{\Delta\mathsf{SU(2)}})^{3} $&18\\
			& $ \quotient{\mathsf{SU}(4)}{\mathsf{SO}(4)} $&15\\
			& $ \mathbb{S}^{9} \simeq\quotient{\mathsf{SO}(10)}{\mathsf{SO}(9)} $&45\\\bottomrule
		\end{tabular}
	\caption{\label{tab:symm_space}Symmetric spaces of the compact type}
	\end{table}
}
    By considering subalgebras $ \mathfrak{h} \subseteq \mathfrak{k} $, classifications attained by \cite{bohm_kerr_2004}, and the embeddings of the corresponding $ \mathsf{H} $ in $ \mathsf{K} \leq \mathsf{G} $, we may attain a complete list of Alekseevskii unresolved spaces in dimensions 9 and 10 in the sequel.

    In the following classification, we do not consider minimality of the presentation. 
    \begin{proposition}\label{cor:no_sl2c_minimals}
        Suppose $ \mathsf{G}/\mathsf{H} $ is a semisimple homogeneous space with an ideal $ \mathfrak{g}_0 $ isomorphic to $ \mathfrak{sl}_2(\C) $ of $ \mathfrak{g} $. Then, if the projection of $ \mathfrak{h} $ onto $ \mathfrak{g}_0 $ is isomorphic to $ \mathfrak{su}(2) $, there exists a closed Lie subgroup $ \overline{\mathsf{G}} $ of smaller dimension than $ \mathsf{G} $ acting transitively on $ \mathcal{M} $.
    \end{proposition}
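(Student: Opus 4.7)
The plan is to apply Lemma~\ref{lem:trivial_projection_lemma} directly, using the Iwasawa decomposition of $\mathfrak{sl}_2(\C)$ to produce the required vector space complement $\mathfrak{q}_0$ of the projected isotropy $\mathfrak{h}_0 \simeq \mathfrak{su}(2)$ inside $\mathfrak{g}_0$.

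First, I would recall the Iwasawa decomposition $\mathfrak{sl}_2(\C) = \mathfrak{su}(2) \oplus \mathfrak{a}\oplus \mathfrak{n}$, viewing $\mathfrak{sl}_2(\C)$ as a 6-dimensional real Lie algebra. Here $\mathfrak{a} = \R\cdot\mathrm{diag}(1, -1)$ is a 1-dimensional split toral subalgebra and $\mathfrak{n}$ is the 2-dimensional real subalgebra of strictly upper-triangular complex matrices. The compact factor $\mathfrak{su}(2)$ is a maximal compactly embedded subalgebra of $\mathfrak{sl}_2(\C)$, being the $+1$-eigenspace of the Cartan involution $X \mapsto -X^*$.

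Next, I would set $\mathfrak{q}_0 \coloneqq \mathfrak{a}\oplus\mathfrak{n}$, a 3-dimensional real solvable subalgebra of $\mathfrak{g}_0$. Since $\mathfrak{h}_0 \simeq \mathfrak{su}(2)$ by hypothesis, the Iwasawa decomposition yields the vector space direct sum $\mathfrak{g}_0 = \mathfrak{h}_0 \oplus \mathfrak{q}_0$. Let $\mathsf{G}_0 \leq \mathsf{G}$ be the connected subgroup integrating the semisimple ideal $\mathfrak{g}_0$ (closed in $\mathsf{G}$, since $\mathfrak{g}_0$ is a semisimple ideal), and let $\mathsf{Q}_0 \leq \mathsf{G}_0$ be the connected subgroup integrating $\mathfrak{q}_0$. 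The latter is the $\mathsf{AN}$-factor of the global Iwasawa decomposition of $\mathsf{G}_0$, hence is closed in $\mathsf{G}_0$, and so closed in $\mathsf{G}$.

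With these choices the hypotheses of Lemma~\ref{lem:trivial_projection_lemma} are in place. Its proof explicitly constructs the connected subgroup $\overline{\mathsf{G}} \leq \mathsf{G}$ with Lie algebra $\overline{\mathfrak{g}} = \mathfrak{q}_0\oplus\mathfrak{g}_1$ acting transitively on $\mathcal{M}$. Since $\dim\mathfrak{q}_0 = 3 < 6 = \dim_\R\mathfrak{sl}_2(\C)$, we obtain $\dim\overline{\mathsf{G}} = \dim\mathsf{G} - 3$, giving the desired strict reduction in dimension. No substantive obstacle is anticipated: once the Iwasawa complement inside $\mathfrak{sl}_2(\C)$ is identified, the proposition is essentially a corollary of Lemma~\ref{lem:trivial_projection_lemma}, with the only technical verification being the closedness of $\mathsf{Q}_0$ in $\mathsf{G}$, which is standard.
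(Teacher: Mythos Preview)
Your proposal is correct and follows essentially the same route as the paper: the paper observes that $\mathfrak{h}_0 \simeq \mathfrak{su}(2)$ is maximal compactly embedded in $\mathfrak{sl}_2(\C)$, takes a Borel subgroup $\mathsf{B} = \mathsf{Q}_0$ complementary to it, and invokes Lemma~\ref{lem:trivial_projection_lemma}. Your choice $\mathfrak{q}_0 = \mathfrak{a}\oplus\mathfrak{n}$ from the Iwasawa decomposition is exactly this Borel, so the two arguments coincide, with your version supplying the explicit closedness check for $\mathsf{Q}_0$ that the paper leaves implicit.
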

    \begin{proof}
        Since the projection $ \mathfrak{h}_0 $ of $ \mathfrak{h} $ onto $ \mathfrak{sl}_2(\C) $ is a maximal compactly embedded subalgebra of $ \mathfrak{sl}_2(\C) $, there exists a Borel subgroup $ \mathsf{B} = \mathsf{Q}_0 $ such that $ \mathfrak{sl}_2(\C) = \mathfrak{h}_0 \oplus \mathfrak{b} $. Therefore, $ \mathsf{G}/\mathsf{H} $ is not a minimal presentation by Lemma \ref{lem:trivial_projection_lemma}.
    \end{proof}

    Moreover, since $ \dim\mathsf{SU}(3) = 8 $, there are no 9 or 10 dimensional homogeneous spaces with an $ \mathsf{SU}(3) $ transitive group. We disregard these cases.
    
    \subsection{$ \dim\quotient{\mathsf{G}}{\mathsf{K}} = 3 $}
    For the symmetric space $ \mathsf{SU}(2)\times \mathsf{SU}(2)/\Delta\mathsf{SU}(2) $, no examples exist, since $ \dim \mathsf{G} = 6 < 9, 10 $.

    \subsection{$ \dim\mathsf{G}/\mathsf{K} = 4 $} We have the following examples of 9 and 10 dimensional compact homogeneous spaces.

    \subsubsection{$ \quotient{\mathsf{G}}{\mathsf{K}} = \quotient{\mathsf{Sp}(2)}{\mathsf{Sp}(1)^{2}} $}
    Since $ \mathsf{Sp}(2) $ is 10 dimensional, we need a 1 dimensional subalgebra of $ \mathfrak{sp}(1)\oplus \mathfrak{sp}(1) $. Indeed, for every coprime $ p, q \in \Z $, we have the compact 9 dimensional homogeneous space $ \mathsf{Sp}(2)/\Delta_{p, q}\mathsf{U}(1) $. It has non-compact dual $ \mathsf{Sp}(1, 1)/\Delta_{p, q}\mathsf{U}(1) $, and the isotropy representation is given by $ \mathfrak{q}\oplus \mathfrak{p} = \qrep01\oplus\qrep12\oplus\qrep22\oplus\prep12\oplus\prep22 $, where
    \begin{multicols}{2}
        \noindent
        \begin{align*}
            \qrep01 &= \left\{ \begin{bmatrix}
                -qxi & 0\\
                0 & qxi
            \end{bmatrix},\, x\in \R \right\},\\
            \qrep12 &= \left\{ \begin{bmatrix}
                zj & 0\\
                0 & 0
            \end{bmatrix},\, z \in \C \right\},\\
            \qrep22 &= \left\{ \begin{bmatrix}
                0 & 0\\
                0 & zj
            \end{bmatrix},\, z\in\C \right\},
        \end{align*}
        \begin{align*}
            \prep12 &= \left\{ \begin{bmatrix}
                0 & z\\
                \overline{z} & 0
            \end{bmatrix},\, z \in \C \right\},\\
            \prep22 &= \left\{ \begin{bmatrix}
                0 & zj\\
                -zj & 0
            \end{bmatrix},\, z \in \C \right\},
        \end{align*}
        \vfill
    \end{multicols}
    and $ \qrep12 \simeq \qrep22 \simeq \prep22 \iff p = q = 1 $, and $ \prep22\simeq \prep12 \iff p = 0, q = 1 $.

    \subsection{$ \dim\mathsf{G}/\mathsf{K} = 5 $} We have the following examples.
    
    \subsubsection{$ \quotient{\mathsf{G}}{\mathsf{K}} = \quotient{\mathsf{SU}(4)}{\mathsf{Sp}(2)} $}
    The subalgebra $ \mathfrak{h} $ of $ \mathfrak{sp}(2) $ with $ \dim \mathfrak{h} \geq 5 $ is $ \mathfrak{sp}(1)\oplus \mathfrak{sp}(1) $, which gives rise to the homogeneous space $ \quotient{\mathsf{SU}(4)}{\mathsf{Sp}(1)\mathsf{Sp}(1)} $. It has non-compact dual $ \mathsf{SL}_2(\Quaternion)/\mathsf{Sp}(1)\mathsf{Sp}(1) $, with isotropy representation decomposing as $ \mathfrak{q}\oplus\mathfrak{p} = \qrep14 \oplus \prep01\oplus\prep14 $, where
    \[
        \qrep14 = \left\{ \begin{bmatrix}
            0 & h_1\\
            -\overline{h_1} & 0
        \end{bmatrix}\;:\; h_1 \in \Quaternion \right\},\quad
        \prep01 = \left\{ \begin{bmatrix}
            z & 0 \\
            0 & -z
        \end{bmatrix}\;:\; z \in \R \right\},
    \]
    \[
        \prep14 = \left\{ \begin{bmatrix}
            0 & \overline{h_2}\\
            h_2 & 0
        \end{bmatrix}\;:\; h_2 \in \Quaternion \right\},
    \]
    and $ \qrep14 \simeq \prep14 $.

    \subsection{$ \dim\mathsf{G}/\mathsf{K} = 6 $} We have the following examples

    \subsubsection{$ (\mathsf{SU}(2)\times\mathsf{SU}(2)/\Delta\mathsf{SU}(2))^2 $}
    We have the family of 10 dimensional homogeneous spaces $ \quotient{\mathsf{SU}(2)^{4}}{\Delta_{p, q}\mathsf{U}(1)\times \Delta_{r, s}\mathsf{U}(1)} $ and the 9 dimensional homogeneous space $ \quotient{\mathsf{SU}(2)^{4}}{\Delta\mathsf{SU}(2)} $, where $ \Delta\mathsf{SU}(2) $ lives diagonally in $ \mathsf{SU}(2)\times \mathsf{SU}(2) $.

    The non-compact dual of $ \quotient{\mathsf{SU}(2)^{4}}{\Delta_{p, q}\mathsf{U}(1)\times \Delta_{r, s}\mathsf{U}(1)} $ is $ \mathsf{SL}_2(\C)\times\mathsf{SL}_2(\C)/\mathsf{U}(1)^2 $, with isotropy representation $ \mathfrak{q}\oplus\mathfrak{p} = \qrep12\oplus\qrep22\oplus\prep02\oplus\prep12\oplus\prep22 $, with $ \qrep12\simeq\prep12 $ and $ \qrep22 \simeq \prep22 $, where
    \begin{align*}
        {\prep02} = \mathrm{span}\left\{ 
            D_1, D_2
        \right\},&\quad
        {\qrep12} = \mathrm{span}\left\{ 
            A_1, iS_1
        \right\},\\
        {\qrep22} = \mathrm{span}\left\{ 
            A_2, iS_2
        \right\},&\quad
        {\prep12} = \mathrm{span}\left\{ 
            S_1, iA_1
        \right\},\\
        {\prep22} &= \mathrm{span}\left\{ 
            S_2, iA_2
        \right\}
    \end{align*}
    such that
    \[
    	D_i = \begin{bmatrix}
            1 & 0 \\ 0 & -1
        \end{bmatrix},\quad
        A_i = \begin{bmatrix}
            0 & 1\\
            -1 & 0
        \end{bmatrix},\quad
        S_i = \begin{bmatrix}
            0 & 1\\
            1 & 0
        \end{bmatrix}
    \]
    are elements of the $ j $\textsuperscript{th} copy of $ \mathfrak{sl}_2(\C) $.

    The non-compact dual of $ \quotient{\mathsf{SU}(2)^{4}}{\Delta\mathsf{SU}(2)} $ is $ \mathsf{SL}_2(\C)\times\mathsf{SL}_2(\C)/\Delta\mathsf{SU}(2) $, but this space is not minimally presented by Proposition \ref{cor:no_sl2c_minimals}.

    \subsubsection{$ \quotient{\mathsf{G}}{\mathsf{K}} = \quotient{\mathsf{SU}(4)}{\mathsf{U}(3)} $}
    The subalgebras $ \mathfrak{h} \subseteq \mathfrak{u}(3) \subseteq \mathfrak{su}(4) $ with $ \dim \mathfrak{h} \geq 5 $ are $ \mathfrak{su}(3) $, and $ \mathfrak{s}(\mathfrak{u}(1)\oplus \mathfrak{u}(1)\oplus \mathfrak{u}(2)) $, which give rise to the homogeneous spaces $ \quotient{\mathsf{SU}(4)}{\mathsf{SU}(3)} $ and $ \quotient{\mathsf{SU}(4)}{\mathsf{S}(\mathsf{U}(1)\mathsf{U}(1)\mathsf{U}(2))} $. The former is 7 dimensional, and so we may ignore it. The latter has non-compact dual $ \mathsf{SU}(3, 1)/\mathsf{S}(\mathsf{U}(1)\mathsf{U}(1)\mathsf{U}(2)) $, and isotropy representation $ \mathfrak{q}\oplus \mathfrak{p} = \qrep14\oplus\prep12\oplus\prep24 $, where
    \begin{align*}
        \qrep24 &= \left\{
        \begin{bsmallmatrix}
        -i(a + b + c) & 0 & 0 & 0\\
        0 & ia & 0 & 0\\
        0 & 0 & ib & z\\
        0 & 0 & -\overline{z} & ic
        \end{bsmallmatrix},\, a, b, c \in \R, z \in \C\right\}\\
        \prep12 &= \left\{
        \begin{bsmallmatrix}
        0 & 0 & 0 & 0\\
        0 & 0 & y_1 & y_2\\
        0 & -\overline{y_1} & 0 & 0\\
        0 & -\overline{y_{2}} & 0 & 0
        \end{bsmallmatrix},\, y_i \in \R
        \right\},\quad\prep24 =
        \left\{\begin{bsmallmatrix}
        0 & x_1 & x_2 & x_3\\
        -\overline{x_1} & 0 & 0 & 0\\
        -\overline{x_2} & 0 & 0 & 0\\
        -\overline{x_3} & 0 & 0 & 0
        \end{bsmallmatrix},\, x_i \in \R\right\}.
    \end{align*}

    \subsubsection{$ \quotient{\mathsf{G}}{\mathsf{K}} = \quotient{\mathsf{SO}(7)}{\mathsf{SO}(6)} $}
    We are looking for subgroups $ \mathsf{H} $ of $ \mathsf{SO}(6) $ with $ \dim \mathsf{H} \geq 11 $. Since $ \dim\mathsf{SO}(6) = 15 $, we need a subalgebra of $ \mathfrak{so}(6) $ of dimension at least 11. But, since $ \mathfrak{so}(6) = \mathfrak{su}(4) $, and the maximal subalgebras of $ \mathfrak{su}(4) $ are $ \mathfrak{u}(3) = \mathfrak{s}(\mathfrak{u}(1)\oplus \mathfrak{u}(3)) $, $ \mathfrak{s}(\mathfrak{u}(2)\oplus \mathfrak{u}(2)) $, $ \mathfrak{sp}(2) $ and $ \mathfrak{so}(4) $, which are all of dimension at most 10. Therefore, there are no subgroups of $ \mathsf{SO}(6) $ with dimension at least 11, so there are no homogeneous spaces of the form we are looking for with associated symmetric space $ \quotient{\mathsf{SO}(7)}{\mathsf{SO}(6)} $.

    \subsubsection{$ \quotient{\mathsf{G}}{\mathsf{K}} = \quotient{\mathsf{Sp}(2)}{\mathsf{U}(2)} $}
    We need a 1 dimensional subalgebra of $ \mathfrak{u}(2) $.
    The only example of this is $ \Delta_{p, q}\mathfrak{u}(1) $, for $ p, q \in \Z $ coprime. This gives rise to the homogeneous space $ \quotient{\mathsf{Sp}(2)}{\Delta_{p, q}\mathsf{U}(1)} $, which has non-compact dual $ \mathsf{Sp}(2, \R)/\Delta_{p, q}\mathsf{U}(1) $. The isotropy representation is $ \mathfrak{q}\oplus \mathfrak{p} = \qrep01\oplus\qrep12\oplus\prep12\oplus\prep22\oplus\prep23 $, with
    \begin{align*}
        \qrep12 = \left\{\begin{bmatrix}
            0 & z\\
            -\overline{z} & 0
        \end{bmatrix},\, z \in \C\right\},&\quad
        \prep12 = \left\{\begin{bmatrix}
            zj & 0\\
            0 & 0
        \end{bmatrix},\, z \in \C\right\},\\
        \prep22 = \left\{\begin{bmatrix}
            0 & zj\\
            zj & 0
        \end{bmatrix},\, z \in \C\right\},&\quad
        \prep32 = \left\{\begin{bmatrix}
            0 & 0\\
            0 & zj
        \end{bmatrix},\, z \in \C\right\},
    \end{align*}
    where $ \prep12 \simeq \prep22 \simeq \prep32 $ if and only if $ p = q = 1 $, and $ \prep22 \simeq \prep32 $ if and only if $ p = 0 $ and $ q = 1 $. Hence, this space admits no invariant Einstein metrics by Theorem \ref{thm:nikonorov_thm_1}

    \subsection{$ \dim\mathsf{G}/\mathsf{K} = 7 $} We have several examples in this case, which we elaborate on below.

    \subsubsection{$ \mathsf{G}/\mathsf{K} = (\quotient{\mathsf{SU}(3)}{\mathsf{U}(2)})\times(\quotient{\mathsf{SU}(2)\times\mathsf{SU}(2)}{\Delta\mathsf{SU}(2)}) $}
    We can embed a one dimensional torus diagonally in $ \mathsf{U}(2)\times \mathsf{SU}(2) $, and attain the compact homogeneous space
    \[ \quotient{\mathsf{SU}(3)\times \mathsf{SU}(2)\times \mathsf{SU}(2)}{\Delta_{p, q}\mathsf{U}(1)(\mathsf{SU}(2)\times \left\{ e \right\})}. \]
    This has non-compact dual 
    \[
        \mathsf{SU}(2, 1)\times \mathsf{SL}_2(\C)/\Delta_{p, q}(\mathsf{SU}(2)\times\left\{e\right\}).
    \]
    This has isotropy representation $ \mathfrak{q}\oplus\mathfrak{p} = \qrep01\oplus\qrep12\oplus\prep01\oplus\prep14\oplus\prep22 $, where
    \resizebox{1.0 \textwidth}{!}{
            \begin{math}
            \begin{aligned}
        {\qrep01} & = \mathrm{span}\left\{ 
                 \begin{bmatrix}
                     i & 0 & 0 & 0 & 0\\
                     0 & i & 0 & 0 & 0\\
                     0 & 0 & -2i & 0 & 0\\
                     0 & 0 & 0 & 0 & 0\\
                     0 & 0 & 0 & 0 & 0
                 \end{bmatrix}
              \right\},\quad 
              {\prep01} = \mathrm{span}\left\{ 
                  \begin{bmatrix}
                    0 & 0 & 0 & 0 & 0\\
                    0 & 0 & 0 & 0 & 0\\
                    0 & 0 & 0 & 0 & 0\\
                    0 & 0 & 0 & 1 & 0\\
                    0 & 0 & 0 & 0 & -1\\
                  \end{bmatrix}
               \right\}\\
               {\qrep12} & =
               \mathrm{span}\left\{ 
                   \begin{bmatrix}
                    0 & 0 & 0 & 0 & 0\\
                    0 & 0 & 0 & 0 & 0\\
                    0 & 0 & 0 & 0 & 0\\
                    0 & 0 & 0 & 0 & 1\\
                    0 & 0 & 0 & -1 & 0\\
                   \end{bmatrix}, 
                   \begin{bmatrix}
                    0 & 0 & 0 & 0 & 0\\
                    0 & 0 & 0 & 0 & 0\\
                    0 & 0 & 0 & 0 & 0\\
                    0 & 0 & 0 & 0 & i\\
                    0 & 0 & 0 & i & 0\\
                   \end{bmatrix}
                \right\},\,
                \\
                {\prep22} & = 
                \mathrm{span}\left\{ 
                   \begin{bmatrix}
                    0 & 0 & 0 & 0 & 0\\
                    0 & 0 & 0 & 0 & 0\\
                    0 & 0 & 0 & 0 & 0\\
                    0 & 0 & 0 & 0 & 1\\
                    0 & 0 & 0 & 1 & 0\\
                   \end{bmatrix}, 
                   \begin{bmatrix}
                    0 & 0 & 0 & 0 & 0\\
                    0 & 0 & 0 & 0 & 0\\
                    0 & 0 & 0 & 0 & 0\\
                    0 & 0 & 0 & 0 & i\\
                    0 & 0 & 0 & -i & 0\\
                   \end{bmatrix}
                \right\},\\
                {\prep14} & = 
                \mathrm{span}\left\{ 
                    \begin{bmatrix}
                        0 & 0 & 1 & 0 & 0\\
                        0 & 0 & 0 & 0 & 0\\
                        1 & 0 & 0 & 0 & 0\\
                        0 & 0 & 0 & 0 & 0\\
                        0 & 0 & 0 & 0 & 0
                    \end{bmatrix},
                    \begin{bmatrix}
                        0 & 0 & i & 0 & 0\\
                        0 & 0 & 0 & 0 & 0\\
                        -i & 0 & 0 & 0 & 0\\
                        0 & 0 & 0 & 0 & 0\\
                        0 & 0 & 0 & 0 & 0
                    \end{bmatrix},
                    \begin{bmatrix}
                        0 & 0 & 0 & 0 & 0\\
                        0 & 0 & 1 & 0 & 0\\
                        0 & 1 & 0 & 0 & 0\\
                        0 & 0 & 0 & 0 & 0\\
                        0 & 0 & 0 & 0 & 0
                    \end{bmatrix},
                    \begin{bmatrix}
                        0 & 0 & 0 & 0 & 0\\
                        0 & 0 & i & 0 & 0\\
                        0 & -i & 0 & 0 & 0\\
                        0 & 0 & 0 & 0 & 0\\
                        0 & 0 & 0 & 0 & 0
                    \end{bmatrix}
                \right\},
        \end{aligned}
    \end{math}
    }
    and $ \prep22 \simeq \qrep12 $

    One other example is $ \quotient{\mathsf{SU}(3)\times \mathsf{SU}(2)\times \mathsf{SU}(2)}{\mathsf{SU}(2)\times \Delta_{p, q}\mathsf{U}(1)} $, 
    where the torus is embedded in only the second factor. If we embed the torus in only the first factor, we have a Riemannian product, so we can ignore this one. It is important however to remark that we do not know whether this space since it admits invariant Einstein metrics, since it has an $ \mathsf{SL}_{2}(\C) $ factor. Analogously, it has non-compact dual $ \mathsf{SU}(2, 1) \times \mathsf{SL}_2(\C)/\mathsf{SU}(2)\Delta_{p, q}\mathsf{U}(1) $, with isotropy representation $ \qrep01 \oplus\qrep12 \oplus \prep01 \oplus\prep14 \oplus \prep22 $, and $ \prep22 \simeq \qrep12 $.

    In order to construct any further examples, we need a 4 or 5 dimensional subalgebra of $ \mathfrak{u}(2)\oplus \mathfrak{su}(2) $. Looking at the largest proper subalgebras of each of these components, we find that there is only 1 more example. That is, we have the $ 10 $ dimensional compact homogeneous space
    \[ \quotient{\mathsf{SU}(3)\times \mathsf{SU}(2)^{2}}{\mathsf{U}(1)\Delta\mathsf{SU}(2)}, \]
    where the subalgebra $ \mathfrak{u}(1)\oplus \Delta\mathfrak{su}(2) $ has the $ \mathfrak{su}(2) $ subalgebra embedded diagonally in the $ \mathfrak{u}(2)\oplus \mathfrak{su}(2) $ isotropy subalgebra. The non-compact dual to this space is 
    \[
    	\mathsf{SU}(2, 1)\times \mathsf{SL}_2(\C)/\mathsf{U}(1)\Delta\mathsf{SU}(2).
    \]
    However, this space is not minimally presented by Proposition \ref{cor:no_sl2c_minimals}.

    \subsubsection{$ \mathsf{G}/\mathsf{K} = (\quotient{\mathsf{Sp}(2)}{\mathsf{Sp}(1)^{2}})\times (\quotient{\mathsf{SU}(2)\times \mathsf{SU}(2)}{\Delta\mathsf{SU}(2)}) $}
    We have the example $ \quotient{\mathsf{Sp}(2)\times \mathsf{SU}(2)^{2}}{\mathsf{Sp}(1)\Delta\mathsf{SU}(2)}, $
    where $ \Delta\mathsf{SU}(2) $ lives diagonally in $ \mathsf{Sp}(1) \times \mathsf{SU}(2) $, where $ \mathsf{Sp}(1) $ is the second factor in the isotropy $ \mathsf{Sp}(1)\times \mathsf{Sp}(1)\times \mathsf{SU}(2) $. There are no other homogeneous spaces for $ \dim\quotient{\mathsf{G}}{\mathsf{K}} = 7 $. This example has non-compact dual given by $ \mathsf{Sp}(1, 1)\times \mathsf{SL}_2(\C)/\mathsf{Sp}(1)\Delta\mathsf{SU}(2) $, which again is not minimally presented by Proposition \ref{cor:no_sl2c_minimals}.

    \subsubsection{$ \quotient{\mathsf{G}}{\mathsf{K}} = \quotient{\mathsf{SO}(8)}{\mathsf{SO}(7)} $}
    Since $ \mathsf{SO}(7) $ is simple, there are no codimension 1 subgroups. Furthermore, there are no codimension 2 or 3 subgroups of $ \mathsf{SO}(7) $ either, since the maximal subgroups of $ \mathsf{SO}(7) $ are $ \mathsf{SO}(6), \mathsf{SO}(2) \mathsf{SO}(5), \mathsf{SO}(3) \mathsf{SO}(4) $ and $ \mathsf{G}_{2} $ \cite{bohm_kerr_2004}.

    \subsection{$ \dim\mathsf{G}/\mathsf{K} = 8 $} We go through each symmetric space to construct the following examples.
    \subsubsection{$ \mathsf{G}/\mathsf{K} = (\quotient{\mathsf{SU}(3)}{\mathsf{U}(2)})^{2} $}
    The space $ (\quotient{\mathsf{SU}(3)}{\mathsf{SU}(2)})^{2} $ is a 10 dimensional homogeneous space with symmetric space $ (\quotient{\mathsf{SU}(3)}{\mathsf{U}(2)})^{2} $. It has non-compact dual given by $ \mathsf{SU}(2, 1)^2/\mathsf{SU}(2)^2 $. Being a homogeneous product, it has isotropy representation given by $ \mathfrak{q}\oplus \mathfrak{p} = \qrep02 \oplus \prep14 \oplus \prep24 $ \cite{arroyo_lafuente_2016}.

    Since $ \mathfrak{u}(2)\oplus \mathfrak{u}(2) $ is the direct sum of two copies of $ \mathfrak{su}(2) $ and a $ 2 $ dimensional abelian subalgebra, we may embed a torus of dimension $ 1 $ diagonally in $ \mathfrak{u}(2)\oplus \mathfrak{u}(2) $ to attain the $ 9 $ dimensional homogeneous space $ \quotient{\mathsf{SU}(3)^{2}}{\mathsf{SU}(2)^{2}\Delta_{p, q}\mathsf{U}(1)} $. It has non-compact dual given by $ \mathsf{SU}(2, 1)^2/\mathsf{SU}(2)^2\Delta_{p, q}\mathsf{U}(1) $, and the isotropy representation decomposes as $ \mathfrak{q}\oplus \mathfrak{p} = \qrep01 \oplus \prep14 \oplus \prep24 $, where
    the representations $ \prep14 $ and $ \prep24 $ on each of the $ \mathfrak{su}(2, 1) $ factors are isomorphic. By Theorem \ref{thm:nikonorov_thm_1}, this space admits no invariant Einstein metrics of negative Ricci curvature.
    
    There are no other examples of codimension $ 1 $ subalgebras in this case.
    
    \subsubsection{$ \quotient{\mathsf{G}}{\mathsf{K}} = \quotient{\mathsf{SU}(4)}{\mathsf{S}(\mathsf{U}(2)^{2})} $}
    There is only one codimension 2 subalgebra of $ \mathfrak{s}(\mathfrak{u}(2)\oplus \mathfrak{u}(2)) $, given by $ \mathfrak{s}(\mathfrak{u}(1)\oplus \mathfrak{u}(1)\oplus \mathfrak{u}(2)) $, where we take the embedding of $ \mathsf{U}(1)\mathsf{U}(1) $ inside of the first $ \mathsf{U}(2) $. This gives rise to the 10 dimensional homogeneous space $ \quotient{\mathsf{SU}(4)}{\mathsf{S}(\mathsf{U}(1)\mathsf{U}(1)\mathsf{U}(2))} $, which has non-compact dual given by $ \mathsf{SU}(2, 2)/\mathsf{S}(\mathsf{U}(1)\mathsf{U}(1)\mathsf{U}(2)) $. The isotropy representation decomposes as $ \mathfrak{q}\oplus \mathfrak{p} = \qrep12 \oplus\prep14 \oplus \prep24 $, where
    \begin{align*}
        \qrep12 &= \left\{\begin{bmatrix}
            0&z&0&0\\
            -\overline{z}&0&0&0\\
            0&0&0&0\\
            0&0&0&0
            \end{bmatrix},\, z \in \C\right\},\\
            \prep14 &= \left\{\begin{bmatrix}
            0&0&x_1&x_2\\
            0&0&0&0\\
            -\overline{x_{1}}&0&0&0\\
            -\overline{x_{2}}&0&0&0
            \end{bmatrix},\, x_i \in \C\right\},\quad \prep24 = \left\{\begin{bmatrix}
            0&0&0&0\\
            0&0&x_3&x_4\\
            0&-\overline{x_3}&0&0\\
            0&-\overline{x_{4}}&0&0
            \end{bmatrix},\, x_i \in \C\right\}.
    \end{align*}
    There is only one codimension 1 subalgebra of $ \mathfrak{s}(\mathfrak{u}(2)\oplus\mathfrak{u}(2)) $, given by $ \mathfrak{su}(2)\oplus\mathfrak{su}(2) $, which gives the homogeneous space $ \mathsf{SU}(4)/\mathsf{SU}(2)^2 $. Its non-compact dual is $ \mathsf{SU}(2, 2)/\mathsf{SU}(2)^2 $, and has isotropy representation $ \mathfrak{q}\oplus\mathfrak{p} = \qrep01 \oplus\prep14\oplus\prep14 $, so by Theorem \ref{thm:nikonorov_thm_1}, admits no invariant Einstein metrics of negative Einstein constant.

    \subsubsection{$ \quotient{\mathsf{G}}{\mathsf{K}} = \quotient{\mathsf{SU}(5)}{\mathsf{U}(4)} $}
    The only codimension 1 subalgebra of $ \mathsf{u}(4) $ is $ \mathsf{su}(4) $, giving rise to the homogeneous space $ \quotient{\mathsf{SU}(5)}{\mathsf{SU}(4)} $. It has non-compact dual $ \mathsf{SU}(4, 1)/\mathsf{SU}(4) $, and the isotropy representation decomposes as $ \mathfrak{q}\oplus \mathfrak{p} = \qrep01 \oplus \prep18 $.

    \subsubsection{$ \quotient{\mathsf{G}}{\mathsf{K}} = \quotient{\mathsf{Sp}(3)}{\mathsf{Sp}(1)\mathsf{Sp}(2)} $}
    Clearly, there are no codimension 1 or 2 subalgebras of $ \mathfrak{sp}(2) $, and the only codimension 1 or 2 subalgebra of $ \mathfrak{sp}(1) $ is $ \mathfrak{u}(1) $. Therefore, the only 9 or 10 dimensional homogeneous space is $ \quotient{\mathsf{Sp}(3)}{\mathsf{U}(1)\mathsf{Sp}(2)} $, where the embedding $ \mathsf{U}(1) \hookrightarrow \mathsf{Sp}(1) $ is the canonical one. It has non-compact dual $ \mathsf{Sp}(1, 2)/\mathsf{U}(1)\mathsf{Sp}(2) $, and the isotropy decomposes as $ \mathfrak{q}\oplus\mathfrak{p} = \qrep12 \oplus \prep28 $, where
    \begin{align*} 
        \qrep12 &= \left\{\begin{bmatrix}
        zj & 0 & 0\\
        0 & 0 & 0\\
        0 & 0 & 0
        \end{bmatrix},\, z \in \C\right\},\quad 
        \prep18 = \left\{\begin{bmatrix}
        0 & h & \ell\\
        -\overline{h} & 0 & 0\\
        -\overline{\ell} & 0 & 0
        \end{bmatrix},\, h, \ell \in \Quaternion\right\}. 
    \end{align*}

    \subsubsection{$ \quotient{\mathsf{G}}{\mathsf{K}} = \quotient{\mathsf{G}_{2}}{\mathsf{SU}(2)^{2}} $}
    There are 2 distinct embeddings of a codimension 2 subalgebra of $ \mathfrak{su}(2) \oplus \mathfrak{su}(2) $ in $ \mathsf{SU}(2)^{2} $. Their homogeneous spaces are $ \quotient{\mathsf{G}_{2}}{\mathsf{U}(2)_{1}} $ and $ \quotient{\mathsf{G}_{2}}{\mathsf{U}(2)_{3}} $, with their embeddings described in \cite{dickinson_kerr_2008}. They have non-compact duals given by $ \mathsf{G}^2_2/\mathsf{U}(2)_1 $ and $ \mathsf{G}^2_2/\mathsf{U}(2)_3 $ respectively. Moreover, their isotropy representations decompose as $ \qrep12 \oplus \prep14\oplus\prep14 $ and $ \qrep12 \oplus\prep18 $ respectively \cite[]{dickinson_kerr_2008}. There are no codimension 1 subalgebras since $ \mathfrak{su}(2)\oplus \mathfrak{su}(2) $ is semisimple.

    \subsubsection{$ \quotient{\mathsf{G}}{\mathsf{K}} = \quotient{\mathsf{SO}(9)}{\mathsf{SO}(8)} $}
    We need a codimension 1 or 2 subalgebra $ \mathfrak{h} \subseteq \mathfrak{k} $ in order to construct a 9 or 10 dimensional homogeneous space $ \quotient{\mathsf{G}}{\mathsf{H}} $. There are no codimension 1 subalgebras since $ \mathfrak{so}(8) $ is simple by the above reasoning, and there are no codimension 2 subalgebras of $ \mathfrak{so}(8) $.

    There are no other examples in $ \dim\quotient{\mathsf{G}}{\mathsf{K}} = 8 $, since the isotropy subalgebras of all other spaces admit no codimension 1 or 2 subalgebras.

    \subsection{$ \dim\mathsf{G}/\mathsf{K} = 9 $} Finally, we have the following examples from 9 dimensional symmetric spaces.
    \subsubsection{$ \quotient{\mathsf{G}}{\mathsf{K}} = \quotient{\mathsf{SU}(4)}{\mathsf{SO}(4)} $}
    Since $ \quotient{\mathsf{G}}{\mathsf{K}} $ is 9-dimensional, we need a codimension 1 subalgebra $ \mathfrak{h} \subseteq \mathfrak{k} $ in order to construct any 10-dimensional homogeneous space $ \quotient{\mathsf{G}}{\mathsf{H}} $. But since a codimension 1 subalgebra $ \mathfrak{h} $ necessarily gives rise to 1 dimensional direct sum complement $ \mathfrak{a} $, $ \mathfrak{a} $ is an abelian subalgebra of $ \mathfrak{so}(4) $. But $ \mathfrak{so}(4) $ is semisimple, so no such subalgebra can exist, so there are no 10-dimensional homogeneous spaces of the form we are looking for with associated symmetric space $ \quotient{\mathsf{SU}(4)}{\mathsf{SO}(4)} $.

    \subsubsection{$ \quotient{\mathsf{G}}{\mathsf{K}} = \quotient{\mathsf{SO}(10)}{\mathsf{SO}(9)} $}
    Since $ \quotient{\mathsf{G}}{\mathsf{K}} $ is 9-dimensional, in order to recover a $ 10 $-dimensional homogeneous space, we need a codimension 1 subalgebra $ \mathfrak{h} \subseteq \mathfrak{so}(9) $. Since no such subalgebra exists, there are no 10-dimensional homogeneous spaces of the form we are looking for with associated symmetric space $ \quotient{\mathsf{SO}(10)}{\mathsf{SO}(9)} $.

    In order to construct another homogeneous space $ \quotient{\mathsf{G}}{\mathsf{H}} $ of dimension 10, we need a subalgebra of $ \mathfrak{k} $ of codimension 1. Since all of the remaining symmetric product spaces have two factors, none of which contain a $ 1 $-dimensional torus, the largest non-product example of a homogeneous space has a codimension 2 isotropy subgroup of $ \mathsf{K} $. Hence, no further examples exist in $ \dim\quotient{\mathsf{G}}{\mathsf{K}} = 9 $.

    \section{Proof of Theorem \ref{thm:main_alek_theorem}}
    We will now prove Theorem \ref{thm:main_alek_theorem} by conducting a case-by-case analysis of the space of invariant metrics on the remaining AU-spaces. 
    \begin{proof}[Proof of Theorem \ref{thm:main_alek_theorem}]
        Without loss of generality, suppose $ \mathsf{G}/\mathsf{H} $ is an AU-space. In lieu of Theorem \ref{thm:nikonorov_thm_1} and \S\ref{sec:classification}, it is sufficient to verify the non-existence of invariant Einstein metrics with negative Einstein constant on the following cases:
        \[
        	\mathsf{SL}_2(\Quaternion)/\mathsf{Sp}(1)^2,\quad \mathsf{Sp}(1,1)/\Delta_{p, q}\mathsf{U}(1),\quad \mathsf{SL}_2(\C)\times \mathsf{SL}_2(\C)/\mathsf{U}(1)^2,    
        \]
        \[
        	\mathsf{SU}(2, 1)\times \mathsf{SL}_2(\C)/\Delta_{p, q}\mathsf{U}(1)(\mathsf{SU}(2)\times\left\{ e \right\}),\quad \mathsf{SU}(2, 1)\times \mathsf{SL}_2(\C)/\mathsf{SU}(2)\times \Delta_{p, q}\mathsf{U}(1).
        \]

        \subsection{$ \mathsf{SL}_2(\Quaternion)/\mathsf{Sp}(1)^2 $}
        We have the 1-parameter family of isomorphisms $ \psi_\lambda : \qrep14 \to \prep14 $ given by
        \[
        	\psi_\lambda\left( \begin{bmatrix}
                0 & h_1\\
                -\overline{h_1} & 0
            \end{bmatrix} \right) = 
            \begin{bmatrix}
                0 & \lambda\overline{h_1}\\
                \lambda h_1 & 0
            \end{bmatrix},\quad \lambda \in \R.
        \]
        Hence, by Schur's Lemma, every $ \ad_\mathfrak{h} $-homomorphism $ \phi : \mathfrak{q} \to \mathfrak{p} $ is given by $ \phi = \psi_\lambda $ for some $ \lambda \in \R $, and so $ \Hom(\mathfrak{q}, \mathfrak{p}) = \R $.

        Let
        \[
        	U \coloneqq \begin{bmatrix}
                1 & 0\\
                0 & 1
            \end{bmatrix},\quad D \coloneqq \begin{bmatrix}
                1 & 0\\
                0 & -1
            \end{bmatrix},\quad A \coloneqq \begin{bmatrix}
                0 & 1\\
                -1 & 0
            \end{bmatrix},\quad S \coloneqq \begin{bmatrix}
                0 & 1\\
                1 & 0
            \end{bmatrix},
        \]
        we have the ordered basis 
        \[ \mathcal{B}_{\mathfrak{sl}_2(\Quaternion)} = \left\{ D,  S, iS, jS, kS, A, iA, jA, kA,iD, iU, jD, jU, kD, kU \right\} \]
        of $ \mathfrak{sl}_2(\Quaternion) $. Using this, we attain the following ordered basis $ \mathcal{B}_{\mathfrak{q}\oplus \mathfrak{p}} = \mathcal{B}_{\prep01}\cup \mathcal{B}_{\prep14}\cup \mathcal{B}_{\qrep14} $, where the ordered bases of the irreducible submodules are
        \begin{align*}
            \mathcal{B}_{\prep01}& \coloneqq \left\{ D \right\},\quad \mathcal{B}_{\prep14} \coloneqq \left\{ S, iS, jS, kS \right\},\quad\mathcal{B}_{\qrep14}\coloneqq \left\{ A, iA, jA, kA \right\}.
        \end{align*}
        and fix an $ \mathrm{Ad}(\mathsf{Sp}(1)^2) $-invariant inner product $ \iprod{\cdot}{\cdot}_{1} $ on $ \mathfrak{p}\oplus\mathfrak{q} $ making $ \mathcal{B}_{\mathfrak{p}\oplus\mathfrak{q}} $ orthonormal. Since $ \prep14,$ and $\qrep14 $ are of real type, we have the following

        \begin{lemma}\label{lem:sl2hsp1sp1_metrics}
            Up to isometry, the space of $ \mathsf{SL}_2(\Quaternion) $-invariant metrics on $ \mathsf{SL}_2(\Quaternion)/\mathsf{Sp}(1)^2 $ is parametrised by the 4-parameter family of $ \mathrm{Ad}(\mathsf{Sp}(1)^2) $-invariant inner products on $ \mathfrak{p}\oplus \mathfrak{q} $ of the form
            \[
            	\iprod{\cdot}{\cdot}_Q = \iprod{Q\cdot}{\cdot}_1,
            \]
            where $ Q \in \mathsf{GL}_9(\R) $ is given by
            \[
            	Q = \begin{bmatrix}
                    a & 0 & 0\\
                    0 & b\mathrm{I}_4 & d\mathrm{I}_4\\
                    0 & d\mathrm{I}_4 & c\mathrm{I}_4
                \end{bmatrix},\quad a, b, c > 0,\, d^2 < bc.
            \]
        \end{lemma}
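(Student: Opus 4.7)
The plan is to apply Schur's Lemma blockwise to the decomposition $\mathfrak{m} = \prep01 \oplus \prep14 \oplus \qrep14$, realising a general $\mathrm{Ad}(\mathsf{Sp}(1)^2)$-invariant inner product as $\iprod{\cdot}{\cdot}_Q = \iprod{Q\cdot}{\cdot}_1$ for a symmetric, positive-definite, $\mathrm{Ad}(\mathsf{Sp}(1)^2)$-equivariant endomorphism $Q$ of $\mathfrak{m}$, and then reading off the block form of $Q$ in the basis $\mathcal{B}_{\mathfrak{q}\oplus\mathfrak{p}}$.

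First I would note that $\prep01$ is the trivial $1$-dimensional submodule, while $\prep14$ and $\qrep14$ are $4$-dimensional and irreducible of real type with $\prep14 \simeq \qrep14$, but neither isomorphic to $\prep01$. By Schur's Lemma, every $\mathrm{Ad}(\mathsf{H})$-homomorphism between non-isomorphic irreducibles vanishes, so the $\prep01$-to-$\prep14$ and $\prep01$-to-$\qrep14$ blocks of $Q$ are zero, and the $\prep01$-diagonal block is a single scalar $a$.

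Next, on the isotypical summand $\prep14 \oplus \qrep14$, each diagonal block is an $\mathrm{Ad}(\mathsf{H})$-endomorphism of a real-type irreducible module, hence a scalar multiple of the identity by Schur's Lemma combined with the Frobenius Theorem; call these $b\mathrm{I}_4$ and $c\mathrm{I}_4$. The off-diagonal block $Q_{\qrep14,\prep14}$ is an element of $\mathrm{Hom}_{\mathsf{H}}(\qrep14,\prep14) = \R \cdot \psi_1$ as recorded immediately before the lemma, and since the ordered bases $\mathcal{B}_{\qrep14}$ and $\mathcal{B}_{\prep14}$ are chosen to correspond precisely under $\psi_1$ (by matching the quaternionic parameters $\{1,i,j,k\}$), its matrix representative is $d\mathrm{I}_4$ for some $d \in \R$. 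Symmetry of $Q$ forces the transposed block to carry the same entry.

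Finally, I would translate positive-definiteness into the parameter constraints: the $\prep01$-block requires $a > 0$, and the isotypical block has the form of $\begin{pmatrix} b & d \\ d & c \end{pmatrix} \otimes \mathrm{I}_4$, which is positive definite exactly when $b > 0$ and $bc - d^2 > 0$ (automatically giving $c > 0$). Assembling these ingredients yields precisely the 4-parameter family stated, and the phrase ``up to isometry'' reflects that the freedom to rescale $\psi_\lambda$ has been absorbed by fixing $\psi_1$ and the orthonormal reference $\iprod{\cdot}{\cdot}_1$, so no further moduli appear. The only genuine bookkeeping step is verifying that the chosen bases truly diagonalise $\psi_1$, which I expect to be the mildly tedious part, but is otherwise direct from the explicit description of $\prep14$ and $\qrep14$.
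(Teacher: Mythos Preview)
Your argument is correct and is exactly the Schur's Lemma computation the paper leaves implicit: the paper states the lemma immediately after the sentence ``Since $\prep14$ and $\qrep14$ are of real type, we have the following'' and gives no further proof, so you have simply spelled out the details. One small remark: your gloss on ``up to isometry'' is not quite the point---the rescaling of $\psi_\lambda$ is already the parameter $d$, not something quotiented out---but since no isometry reduction is actually used here (in contrast to Lemma~\ref{lem:sl2csl2cu1u1_metrics}), the 4-parameter family is in fact the full space of invariant inner products and the phrase is inessential.
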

        Suppose $ \iprod{\cdot}{\cdot}_Q $ induces an $ \mathsf{SL}_2(\Quaternion) $-invariant Einstein metric on $ \mathsf{SL}_2(\Quaternion)/\mathsf{Sp}(1)^2 $ with negative Einstein constant, and denote 
        \begin{align*} 
            \mathcal{B}^Q_{\mathfrak{q}\oplus\mathfrak{p}} &= \left\{ \dfrac{1}{\sqrt{a}}D, \dfrac{1}{\sqrt{b}}S, \dfrac{1}{\sqrt{b}}iS, \dfrac{1}{\sqrt{b}}jS, \dfrac{1}{\sqrt{b}}kS, \dfrac{b}{\sqrt{\Delta}}S+\dfrac{d}{\sqrt{\Delta}}A,\right. \\
            &\hphantom{hello}\left.\dfrac{b}{\sqrt{\Delta}}iS+\dfrac{d}{\sqrt{\Delta}}iA, \dfrac{b}{\sqrt{\Delta}}jS+\dfrac{d}{\sqrt{\Delta}}jA, \dfrac{b}{\sqrt{\Delta}}kS+\dfrac{d}{\sqrt{\Delta}}kA \right\},\quad \Delta = b(bc-d^2)
        \end{align*}
        Then, $ \mathcal{B}^Q_{\mathfrak{q}\oplus \mathfrak{p}} $ is a $ g_Q $-orthonormal basis. Using \eqref{eq:ricci_formula}, the Ricci curvature takes the following form in this basis:
        \[
        	\mathrm{Ric}_Q \coloneqq \tfrac{1}{a\Delta}\begin{bmatrix}
                R_1
                 & 0 & 0\\
                0 & R_2\mathrm{I}_4 & R_3\mathrm{I}_4\\
                0 & R_3\mathrm{I}_4 & R_4\mathrm{I}_4
            \end{bmatrix},
        \]
        where
        \begin{align*}
            R_1 &= 8b(a^2 - b^2 - 2bc - c^2 + 4d^2),\\
            R_2 &= -2(a^2 b - b^3 + 7abc + bc^2 - 7ad^2 + 2bd^2 - 2cd^2),\\
            R_3 &= 2(7a-2b+2c)d,\\
            R_4 &= -2(a^2 b + b^3 + 2cd^2 - 7a(b^2 - d^2) - b(c^2 + 2d^2)).
        \end{align*}
        Since $ \iprod{\cdot}{\cdot}_Q $ is an Einstein metric, we have
        \[
        	\mathrm{Ric}_{Q}\left( \overline{e_2}, \overline{e_6} \right) = \dfrac{2(7a - 2b + 2c)d}{ab\sqrt{bc - d^2}} = 0
        \]
        and so either $ d = 0 $ or $ b = (7a + 2c)/2 $. In the former case, the Cartan decomposition is orthogonal, giving us a contradiction by Theorem \ref{thm:nikonorov_thm_1}. In the latter case, all of the off diagonal terms of the Ricci curvature vanish, and
        \[
        	\mathrm{Ric}_Q\left( \overline{e_2}, \overline{e_2} \right) = \dfrac{45a}{2(bc - d^2)},
        \]
        which is positive, a contradiction to the negativity of the Einstein constant.

        \subsection{$ \mathsf{Sp}(1,1)/\Delta_{p, q}\mathsf{U}(1) $}
        As long as $ p \neq 1 $ and $ q \neq 1 $, every $ \mathsf{Sp}(1, 1) $-invariant metric is Cartan orthogonal, and hence none of them are Einstein by Theorem \ref{thm:nikonorov_thm_1}. Unfortunately, we were not able to deal with the case $ p = q = 1 $, since the space of $ \mathsf{Sp}(1, 1) $-invariant metrics in this case is too complicated for our methods, representing one possible exception in an infinite family of cases.

        \subsection{$ \mathsf{SL}_2(\C)\times \mathsf{SL}_2(\C)/\mathsf{U}(1)^2 $}
        Using the same notation as before, let
        \[
        	U_j \coloneqq \begin{bmatrix}
                1 & 0\\
                0 & 1
            \end{bmatrix},\quad D_j\coloneqq \begin{bmatrix}
                1 & 0\\
                0 & -1
            \end{bmatrix},\quad A_j \coloneqq \begin{bmatrix}
                0 & 1\\
                -1 & 0
            \end{bmatrix},\quad S_j \coloneqq \begin{bmatrix}
                0 & 1\\
                1 & 0
            \end{bmatrix},\quad j = 1,2
        \]
        be elements in the $ j $\textsuperscript{th} copy of $ \mathfrak{sl}_2(\C) $. Then, we have the following ordered basis for $ \mathfrak{sl}_2(\C)\oplus \mathfrak{sl}_2(\C) $:
        \[
        	\mathcal{B}_{\mathfrak{sl}_2(\C)^2}\coloneqq \left\{ D_1, A_1, S_1, D_2, A_2, S_2, iU_1, iD_1, iA_1, iS_1, iU_2, iD_2, iA_2, iS_2 \right\}.
        \]

        This gives rise to the ordered basis $ \mathcal{B}_{\mathfrak{p}\oplus \mathfrak{q}}=\mathcal{B}_{\prep02}\cup \mathcal{B}_{\qrep12}\cup\mathcal{B}_{\prep12}\cup\mathcal{B}_{\qrep22}\cup\mathcal{B}_{\prep22} $ for $ \mathfrak{p} \oplus \mathfrak{q} $:
        \begin{align*}
            \mathcal{B}_{\prep02} = \left\{ 
                D_1, D_2
            \right\},&\quad
            \mathcal{B}_{\qrep12} = \left\{ 
                A_1, iS_1
            \right\},\\
            \mathcal{B}_{\qrep22} = \left\{ 
                A_2, iS_2
            \right\},&\quad
            \mathcal{B}_{\prep12} = \left\{ 
                S_1, iA_1
            \right\},\\
            \mathcal{B}_{\prep22} &= \left\{ 
                S_2, iA_2
            \right\}
        \end{align*}

        Let us fix an $ \mathrm{Ad}(\mathsf{U}(1)^2) $-invariant inner product $ \iprod{\cdot}{\cdot}_1 $ on the reductive complement of $ \mathfrak{u}(1)\oplus\mathfrak{u}(1) $ such that the ordered basis $ \mathcal{B}_{\mathfrak{p}\oplus\mathfrak{q}} =\left\{ e_i \right\}_{i = 1}^{10} = \mathcal{B}_{\prep02}\cup \mathcal{B}_{\qrep12}\cup\mathcal{B}_{\prep12}\cup\mathcal{B}_{\qrep22}\cup\mathcal{B}_{\prep22} $ for $ \mathfrak{p} \oplus \mathfrak{q} $ is orthonormal.We have the following
        \begin{lemma}\label{lem:sl2csl2cu1u1_metrics}
            Up to isometry, the space of $ \mathsf{SL}_2(\C)^2 $-invariant metrics on $ \mathsf{SL}_2(\C)^2/\mathsf{U}(1)^2 $ is parametrised by the 7-parameter family of $ \mathrm{Ad}(\mathsf{U}(1)^2) $-invariant inner products on $ \mathfrak{p}\oplus \mathfrak{q} $ of the form
            \[
            	\iprod{\cdot}{\cdot}_Q = \iprod{Q\cdot}{\cdot}_1,
            \]
            where $ Q \in \mathsf{GL}_9(\R) $ is given by
            \[
            	Q = \begin{bmatrix}
                    a & c & 0 & 0 & 0 & 0 & 0 & 0 & 0 & 0\\
                    c & b & 0 & 0 & 0 & 0 & 0 & 0 & 0 & 0\\
                    0 & 0 & d & 0 & 0 & \ell & 0 & 0 & 0 & 0\\
                    0 & 0 & 0 & d & -\ell & 0 & 0 & 0 & 0 & 0\\
                    0 & 0 & 0 & -\ell & q & 0 & 0 & 0 & 0 & 0\\
                    0 & 0 & \ell & 0 & 0 & q & 0 & 0 & 0 & 0\\
                    0 & 0 & 0 & 0 & 0 & 0 & f & 0 & 0 & n\\
                    0 & 0 & 0 & 0 & 0 & 0 & 0 & f & -n & 0\\
                    0 & 0 & 0 & 0 & 0 & 0 & 0 & -n & g & 0\\
                    0 & 0 & 0 & 0 & 0 & 0 & n & 0 & 0 & g
                \end{bmatrix},                
            \]
            where $ a, d, q, f, g > 0,\, \ell^2 < dq,\, n^2 < fg,\, c^2 < ab $.
        \end{lemma}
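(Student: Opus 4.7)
The plan is to apply Schur's lemma together with the parameterization $\mathcal{M}^\mathsf{G} = \mathfrak{h}^+_{n_1}(\F_1)\times\cdots\times \mathfrak{h}^+_{n_\ell}(\F_\ell)$ recalled in Section~2 to the isotypical decomposition $\mathfrak{p}\oplus\mathfrak{q} = \prep02 \oplus (\qrep12 \oplus \prep12) \oplus (\qrep22 \oplus \prep22)$. First I would identify the endomorphism type of each irreducible summand: the isotropy $\mathfrak{u}(1)^2$ acts trivially on $\prep02$, so that summand is of real type and contributes the block $\mathfrak{h}_2^+(\R)$, giving the three parameters $a,b,c$ subject to $a, b > 0$ and $c^2 < ab$. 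On each of the four two-dimensional summands the Cartan generators $iD_j$ act by nonzero rotations --- visible from the brackets $[iD_1, A_1] = 2iS_1$, $[iD_1, iS_1] = -2A_1$ and the analogous relations in the remaining summands --- so each is of complex type with $\mathrm{End}_\mathsf{H}(\cdot) \simeq \C$. The explicit bases then give concrete Schur isomorphisms $\qrep{i}{2} \simeq \prep{i}{2}$ within each $\mathfrak{sl}_2(\C)$ factor, while the two isotypical pieces remain inequivalent to each other because their $\mathsf{U}(1)$-characters are supported in different factors.

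By Schur's lemma, an invariant inner product must vanish between any pair of non-isomorphic isotypical components, which immediately produces the block-diagonal structure of $Q$ with three blocks corresponding to $\prep02$, $\qrep12 \oplus \prep12$, and $\qrep22 \oplus \prep22$. Within each complex-type isotypical block the space of invariant inner products is $\mathfrak{h}_2^+(\C)$, i.e.\ the set of positive-definite Hermitian $2\times 2$ complex matrices $\left(\begin{smallmatrix} d & \alpha \\ \overline\alpha & q \end{smallmatrix}\right)$ with $|\alpha|^2 < dq$. Passing from the complex Hermitian form to its real Gram matrix with respect to the chosen bases $\mathcal{B}_{\qrep{i}{2}} \cup \mathcal{B}_{\prep{i}{2}}$ --- which carry the complex structure of $\mathrm{End}_\mathsf{H}$ in the form of an $\mathrm{Ad}(\mathsf{H})$-equivariant rotation --- yields exactly the $4\times 4$ block appearing in $Q$ as soon as $\alpha$ is brought to pure imaginary form $\alpha = -i\ell$; the positivity condition then becomes $\ell^2 < dq$, and the analogous normalization on the other complex block yields $n^2 < fg$. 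Assembling the three blocks reproduces $Q$ and its positivity constraints, and conversely every $Q$ of this form obviously defines an invariant inner product.

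The main obstacle is justifying the pure-imaginary normalization of the intertwiners: a priori each $\alpha$ is a free complex parameter, so the reduction to $\alpha = -i\ell$ requires an additional isometric identification that absorbs a phase. The natural candidate is the residual $\mathsf{S}^1 \subset \mathrm{End}_\mathsf{H}(\prep{i}{2})^\times$ of unitary module automorphisms, which acts on $\mathrm{Hom}_\mathsf{H}(\qrep{i}{2}, \prep{i}{2})$ by phase rotation while fixing the diagonal Hermitian blocks. I would verify that this $\mathsf{S}^1$-action can be realized as an isometry of the homogeneous presentation --- either via an element of the normalizer $N_\mathsf{G}(\mathsf{H})$ together with an outer automorphism, or inside the full isometry group of a generic invariant metric --- so that the resulting parameterization is complete up to isometry. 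Granting this, the rest of the argument (positive-definiteness of each Hermitian block, reconstruction of the full invariant inner product, bookkeeping of the real matrix representation of each complex-type block) is routine.
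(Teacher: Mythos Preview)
Your setup via Schur's lemma is correct and matches the paper: the block structure, the complex type of the two isotypical pairs $\qrep{j}{2}\oplus\prep{j}{2}$, and the identification of the general invariant inner product with Hermitian $2\times 2$ blocks carrying a complex off-diagonal parameter are exactly as in the paper's argument. The gap is precisely where you flag it, and your proposed fix does not work. The unitary $S^1\subset \mathrm{End}_{\mathsf{H}}(\prep{j}{2})^\times$ cannot be realized by isometries of $\mathsf{G}/\mathsf{H}$: for it to induce a diffeomorphism of $\mathsf{G}/\mathsf{H}$ it would have to extend to a Lie algebra automorphism of $\mathfrak{g}$ preserving $\mathfrak{h}$, and a direct bracket check (e.g.\ $[S_1,A_1]=-2D_1$ versus $[iA_1,A_1]=0$) shows the phase rotation on $\prep{j}{2}$ alone does not preserve brackets unless the angle is zero. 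Equivalently, the connected component of $N_{\mathsf{G}}(\mathsf{H})/\mathsf{H}$ in each $\mathsf{SL}_2(\C)$-factor is the \emph{noncompact} one-parameter group generated by $D_j\in\prep02$, not a circle; outer automorphisms contribute only complex conjugation, a $\Z_2$.

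The paper uses exactly this noncompact normalizer action. Since $D_j$ centralizes the isotropy, $P_j(t)=\mathrm{Ad}(\exp tD_j)$ acts on the fixed basis of $\qrep{j}{2}\oplus\prep{j}{2}$ by the hyperbolic matrix
\[
P_j(t)=\begin{bmatrix}
\cosh 2t&0&\sinh 2t&0\\
0&\cosh 2t&0&\sinh 2t\\
\sinh 2t&0&\cosh 2t&0\\
0&\sinh 2t&0&\cosh 2t
\end{bmatrix},
\]
and the isometric family $P_j^T\overline{Q}P_j$ has vanishing ``real part'' of the off-diagonal exactly when $\tfrac12(d+q)\tanh 4t + p = 0$ (resp.\ $\tfrac12(f+g)\tanh 4s + m = 0$). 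These equations are solvable because $p^2<dq\le\tfrac14(d+q)^2$ and similarly for $m$, which is how the reduction to seven parameters is achieved. Note that, unlike a phase rotation, this boost also moves the diagonal entries $d,q$ (resp.\ $f,g$); the point is only that one lands in the stated normal form, not that the other entries are fixed.
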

        \begin{proof}
            Since we have the equivalences $ \prep12 \simeq \qrep12 $ and $ \prep22 \simeq \qrep22 $ of complex type isotropy submodules, we may parametrise the space of $ \mathrm{Ad}(\mathsf{U}(1)^2) $-invariant inner products $ \overline{Q} $ in the form of the lemma, except with a block of the form $ \begin{bsmallmatrix}p & \ell\\ -\ell & p\end{bsmallmatrix} $ mapping $ \qrep12 $ to $ \prep12 $, and a block of the form $ \begin{bsmallmatrix}m & n\\ -n & m\end{bsmallmatrix} $ mapping $ \qrep22 $ to $ \prep22 $ with $ \ell^2 + p^2 < dq $ and $ m^2 + n^2 < fg $. Since $ \prep02 $ is a trivial module, we have the two one-parameter families of automorphisms on $ \qrep j2 \oplus \prep j2 $, given by $ P_j(t) = \mathrm{Ad}(\exp(tD_j)), j = 1, 2 $. In the ordered basis $ \mathcal{B}_{\qrep j2}\cup \mathcal{B}_{\prep j2} = \left\{ A_j, iS_j, S_j, iA_j \right\} $ given above, we have
            \[
            	P_j(t) = \begin{bmatrix}
                    \cosh 2t & 0 & \sinh 2t & 0\\
                    0 & \cosh 2t & 0 & \sinh 2t\\
                    \sinh 2t & 0 & \cosh 2t & 0\\
                    0 & \sinh 2t & 0 & \cosh 2t
                \end{bmatrix}.
            \]
            These give rise to an isometric family of metrics of the form $ P_j^{T}\overline{Q}P_j $ for each $ j = 1, 2 $. Moreover, in these families, $ P_j^T\overline{Q}P_j $ has the form of the Lemma in the both of the $ \qrep j2\oplus \prep j2 $ blocks if and only if there exist $ t, s\in \R $ such that
            \[
            	\tfrac12(g + f)\tanh4s + m = 0,\quad \tfrac12(d + q)\tanh4t + p = 0.
            \]
            Using the relations $ m^2 < fg $ and $ p^2 < dq $, it is easy to find $ t, s $ such that $ P_j^T \overline{Q}P_j $ satisfies the form of $ Q $ in the Lemma in both cases.
        \end{proof}
        Suppose $ Q $ gives rise to an $ \mathsf{SL}_2(\C)^2 $-invariant Einstein metric on $ \mathsf{SL}_2(\C)^2/\mathsf{U}(1)^2 $, and denote $ \mathcal{B}^Q_{\mathfrak{q}\oplus\mathfrak{p}} = \left\{ \overline{e_i} \right\} $ by
        \begin{align*}
            \mathcal{B}^Q_{\mathfrak{q}\oplus\mathfrak{p}} & = \left\{ \tfrac{1}{\sqrt{a}}D_1, \sqrt{\tfrac{a}{ab-c^2}}D_2-\tfrac{c}{\sqrt{a(ab-c^2)}}D_1, \right.\\
            &\hphantom{hello}\left. \tfrac{1}{\sqrt{d}}A_1, \tfrac{1}{\sqrt{d}}iS_1, \sqrt{\tfrac{d}{dq-\ell^2}}S_1 + \tfrac{1}{\sqrt{d(dq-\ell^2)}}iS_1, \tfrac{1}{\sqrt{d(dq-\ell^2)}}A_1-\sqrt{\tfrac{d}{dq-\ell^2}}iA_1,\right.\\
            &\hphantom{hello}\left. \tfrac{1}{\sqrt{f}}A_2, \tfrac{1}{\sqrt{f}}iS_2, \sqrt{\tfrac{f}{fg-n^2}}S_2 + \tfrac{1}{\sqrt{f(fg-n^2)}}iS_2, \tfrac{1}{\sqrt{f(fg-n^2)}}A_2-\sqrt{\tfrac{f}{fg-n^2}}iA_2 \right\}
        \end{align*}
        Then, $ \mathcal{B}^Q_{\mathfrak{q}\oplus\mathfrak{p}} $ is a $ Q $-orthonormal basis. Looking at the Ricci curvature, we have
        
        \[
        	\mathrm{Ric}_Q = \tfrac{1}{adf\Gamma\Sigma^2\Omega^2}\begin{bmatrix}
                R_1 & R_3 & 0 & 0 & 0 & 0 & 0 & 0 & 0 & 0\\
                R_3 & R_2 & 0 & 0 & 0 & 0 & 0 & 0 & 0 & 0\\
                0 & 0 & R_4 & 0 & 0 & R_5 & 0 & 0 & 0 & 0\\
                0 & 0 & 0 & R_4 & -R_5 & 0 & 0 & 0 & 0 & 0\\
                0 & 0 & 0 & -R_5 & R_6 & 0 & 0 & 0 & 0 & 0\\
                0 & 0 & R_5 & 0 & 0 & R_6 & 0 & 0 & 0 & 0\\
                0 & 0 & 0 & 0 & 0 & 0 & R_7 & 0 & 0 & R_8\\
                0 & 0 & 0 & 0 & 0 & 0 & 0 & R_7 & -R_8 & 0\\
                0 & 0 & 0 & 0 & 0 & 0 & 0 & -R_8 & R_9 & 0\\
                0 & 0 & 0 & 0 & 0 & 0 & R_8 & 0 & 0 & R_9
            \end{bmatrix},
        \]
        where $ \Gamma =  ab - c^2, \Sigma = dq - \ell^2, \Omega = fg - n^2, \sigma = dq + \ell^2, \omega = f g + n^2 $, and
        \begin{align*}
            R_1 & = -2f\Gamma\left(-2c^2d\Sigma^2\omega + \left(-2a^2d\sigma + \Sigma\left(2d^3 + 2d\left(\sigma + \Sigma\right) + q\left(\sigma +\Sigma\right)\right)\right)\Omega^2\right) \\
            R_2 & =  2cf\sqrt{\Gamma}\Sigma\left(2d\Gamma\Sigma\omega + \left(2d^3 + 2d\left(\sigma + \Sigma\right) + q\left(\sigma + \Sigma\right)\right)\right)  \\
            R_3 & = -2\left(2c^2d^3f\Sigma\Omega^2 + c^2f\Sigma\left(2d\left(\sigma - \Sigma\right) + q\left(\sigma + \Sigma\right)\right)\Omega^2\right.\\
            &\hphantom{hello}\left.+d\Sigma^2\left(2a^2f^3\Omega-2f\Gamma^2\omega+a^2g\Omega\left(\omega + \Omega\right) + 2f\Omega\left(2c^2\Omega + a^2\left(\omega + \Omega\right)\right)\right)\right)   \\
            R_4 & = -f\Sigma\left( -c^2\left( 2d^3 + q\left( \sigma - 3\Sigma \right) + 2d\left( \sigma - \Sigma \right) \right)\right.\\
            &\hphantom{hello}\left.+\Gamma\left( 2a^2 d - 2d^3 - 2d\sigma - q\sigma - 8a\Sigma + 2d\Sigma + 3q\Sigma \right) \right)\Omega^2   \\
            R_5 & = 4 f l \sqrt{\Sigma } \Omega ^2 \left(a^2 \Gamma  d-2 a \Gamma  \Sigma +\Sigma  \left(c^2+\Gamma
            \right) (d+q)\right)  \\
            R_6 & = -f\Omega\left( 2a^2d\Gamma\left( 2\sigma - \Sigma \right) + \left( c^2 + \Gamma \right)\left( 2d^3 + q\left( \sigma - 3\Sigma \right) +2d\left( \sigma - \Sigma \right) \right)\Sigma\right.\\
            &\hphantom{hello}\left. + 4a\Gamma\Sigma\left( 2d^2 - \sigma + \Sigma \right) \right)   \\
            R_7 & =  d\Sigma^2\Omega\left( -2f\Gamma\left( c^2+\Gamma \right) +a^2\left( 2f^3+g\left( \omega-3\Omega \right)+2f\left( \omega-\Omega \right) \right)+ 8a\Gamma\Omega \right)  \\
            R_8 & = -4dn\Sigma^2\sqrt{\Omega}\left( f\Gamma\left( c^2 + \Gamma \right) + a\left( a\left( f + g \right) - 2\Gamma \right)\Omega \right)  \\
            R_9 & = d\Sigma^2\left(-4f\Gamma^2\omega-\left( 2f\left( a^2f^2+4af\Gamma-\Gamma^2 \right) +a\left( a\left( 2f+g \right)-4\Gamma \right) \omega \right)\Omega\right.\\
            &\hphantom{hello}\left. +a\left( 2af+3ag-4\Gamma \right)\Omega^2 +2c^2f\Gamma\left( \Omega-2\omega \right) \right)   \\
        \end{align*}

        In the off-diagonal directions, we find that
        \begin{align*}
            \mathrm{Ric}_Q(\overline{e_1}, \overline{e_2}) & = \dfrac{2c\Delta}{a\sqrt{ab - c^2}}\\
            & \Delta = 2 + \dfrac{2\ell^2}{d^2} + \dfrac{ab - c^2}{fg - n^2} + \dfrac{2(ab - c^2)n^2}{(n^2 - fg)^2} + \dfrac{(d^2 + \ell^2)^2}{d^2(dq - \ell^2)} + \dfrac{dq - \ell^2}{d^2}.
        \end{align*}
        Since $ dq - \ell^2, fg - n^2, ab - c^2, a, b, d, q, f, g > 0 $, we have that $ \Delta > 0 $, and so $ \mathrm{Ric}_Q(\overline{e_1}, \overline{e_2}) = 0 $ if and only if $ c = 0 $, so that $ \mathsf{SL}_2(\C)^2/\mathsf{U}(1)^2 $ splits as a Riemannian product. But $ \mathsf{SL}_2(\C)/\mathsf{U}(1) $ admits no invariant Einstein metrics by \cite{arroyo_lafuente_2016}.

        \subsection{$ \mathsf{SU}(2, 1)\times \mathsf{SL}_2(\C)/\Delta_{p, q}\mathsf{U}(1)(\mathsf{SU}(2)\times\left\{ e \right\}) $}
        Looking at the decomposition of the isotropy representation, we have the following ordered bases $ \mathcal{B}_{\mathfrak{q}\oplus\mathfrak{p}} = \left\{ e_i \right\}_{i = 1}^{10} = \mathcal{B}_{\qrep01}\cup \mathcal{B}_{\prep01}\cup\mathcal{B}_{\prep22}\cup\mathcal{B}_{\qrep12}\cup\mathcal{B}_{\prep14} $ and $ \mathcal{B}_{\mathfrak{h}} = \left\{ h_i \right\}_{i = 1}^4 $:
        \resizebox{1.0 \textwidth}{!}{
            \begin{math}
            \begin{aligned}
        	\mathcal{B}_{\mathfrak{h}} &= \left\{ 
                \begin{bmatrix}
                    -ip & 0 & 0 & 0 & 0\\
                    0 & i(p + q) & 0 & 0 & 0\\
                    0 & 0 & -iq & 0 & 0\\
                    0 & 0 & 0 & i(p-q) & 0\\
                    0 & 0 & 0 & 0 & i(q-p)
                \end{bmatrix},
                \right.\\
                &\hphantom{===}\left.
                \begin{bmatrix}
                    i & 0 & 0 & 0 & 0\\
                    0 & -i & 0 & 0 & 0\\
                    0 & 0 & 0 & 0 & 0\\
                    0 & 0 & 0 & 0 & 0\\
                    0 & 0 & 0 & 0 & 0
                \end{bmatrix},
                \begin{bmatrix}
                    0 & 1 & 0 & 0 & 0\\
                    -1 & 0 & 0 & 0 & 0\\
                    0 & 0 & 0 & 0 & 0\\
                    0 & 0 & 0 & 0 & 0\\
                    0 & 0 & 0 & 0 & 0
                \end{bmatrix},
                \begin{bmatrix}
                    0 & i & 0 & 0 & 0\\
                    i & 0 & 0 & 0 & 0\\
                    0 & 0 & 0 & 0 & 0\\
                    0 & 0 & 0 & 0 & 0\\
                    0 & 0 & 0 & 0 & 0
                \end{bmatrix}
             \right\}, \\
             \mathcal{B}_{\qrep01} & = \left\{ 
                 \begin{bmatrix}
                     i & 0 & 0 & 0 & 0\\
                     0 & i & 0 & 0 & 0\\
                     0 & 0 & -2i & 0 & 0\\
                     0 & 0 & 0 & 0 & 0\\
                     0 & 0 & 0 & 0 & 0
                 \end{bmatrix}
              \right\},\quad 
              \mathcal{B}_{\prep01} = \left\{ 
                  \begin{bmatrix}
                    0 & 0 & 0 & 0 & 0\\
                    0 & 0 & 0 & 0 & 0\\
                    0 & 0 & 0 & 0 & 0\\
                    0 & 0 & 0 & 1 & 0\\
                    0 & 0 & 0 & 0 & -1\\
                  \end{bmatrix}
               \right\}\\
               \mathcal{B}_{\qrep12} & =
               \left\{ 
                   \begin{bmatrix}
                    0 & 0 & 0 & 0 & 0\\
                    0 & 0 & 0 & 0 & 0\\
                    0 & 0 & 0 & 0 & 0\\
                    0 & 0 & 0 & 0 & 1\\
                    0 & 0 & 0 & -1 & 0\\
                   \end{bmatrix}, 
                   \begin{bmatrix}
                    0 & 0 & 0 & 0 & 0\\
                    0 & 0 & 0 & 0 & 0\\
                    0 & 0 & 0 & 0 & 0\\
                    0 & 0 & 0 & 0 & i\\
                    0 & 0 & 0 & i & 0\\
                   \end{bmatrix}
                \right\},\,
                \mathcal{B}_{\prep22} = 
                \left\{ 
                   \begin{bmatrix}
                    0 & 0 & 0 & 0 & 0\\
                    0 & 0 & 0 & 0 & 0\\
                    0 & 0 & 0 & 0 & 0\\
                    0 & 0 & 0 & 0 & 1\\
                    0 & 0 & 0 & 1 & 0\\
                   \end{bmatrix}, 
                   \begin{bmatrix}
                    0 & 0 & 0 & 0 & 0\\
                    0 & 0 & 0 & 0 & 0\\
                    0 & 0 & 0 & 0 & 0\\
                    0 & 0 & 0 & 0 & i\\
                    0 & 0 & 0 & -i & 0\\
                   \end{bmatrix}
                \right\},\\
                \mathcal{B}_{\prep14} & = 
                \left\{ 
                    \begin{bmatrix}
                        0 & 0 & 1 & 0 & 0\\
                        0 & 0 & 0 & 0 & 0\\
                        1 & 0 & 0 & 0 & 0\\
                        0 & 0 & 0 & 0 & 0\\
                        0 & 0 & 0 & 0 & 0
                    \end{bmatrix},
                    \begin{bmatrix}
                        0 & 0 & i & 0 & 0\\
                        0 & 0 & 0 & 0 & 0\\
                        -i & 0 & 0 & 0 & 0\\
                        0 & 0 & 0 & 0 & 0\\
                        0 & 0 & 0 & 0 & 0
                    \end{bmatrix},
                    \begin{bmatrix}
                        0 & 0 & 0 & 0 & 0\\
                        0 & 0 & 1 & 0 & 0\\
                        0 & 1 & 0 & 0 & 0\\
                        0 & 0 & 0 & 0 & 0\\
                        0 & 0 & 0 & 0 & 0
                    \end{bmatrix},
                    \begin{bmatrix}
                        0 & 0 & 0 & 0 & 0\\
                        0 & 0 & i & 0 & 0\\
                        0 & -i & 0 & 0 & 0\\
                        0 & 0 & 0 & 0 & 0\\
                        0 & 0 & 0 & 0 & 0
                    \end{bmatrix}
                \right\}.
        \end{aligned}
        \end{math}
        }
        We have the equivalences $ \qrep12\simeq\prep22 $, so any invariant metric would make $ \prep01\oplus\qrep01 $, $ \qrep12\oplus\prep22 $ and $ \prep14 $ orthogonal. Moreover, $ \ad(e_1) $ acts trivially on $ \prep01\oplus\qrep01 $ and $ \qrep12 \oplus\prep22 $, and as $ \ad(h_1 + (p+\tfrac12q)h_2) $ on $ \prep14 $, and so acts by skew symmetric endomorphisms on $ \mathfrak{q} \oplus \mathfrak{p} $ for any invariant metric. By \cite{arroyo_lafuente_2016}*{Lemma 2.10}, for any invariant metric $ g $,
        \[
        	\mathrm{Ric}_g(e_1, e_1) = \tfrac14\sum_{i, j}g(\lb{\overline{e_i}}{\overline{e_j}},e_1)^2 \geq 0,
        \]
        where $ \left\{ \overline{e_i} \right\} $ is a $ g $-orthonormal basis.

        \subsection{$ \mathsf{SU}(2, 1)\times \mathsf{SL}_2(\C)/\mathsf{SU}(2)\times \Delta_{p, q}\mathsf{U}(1) $}
        Since the isotropy representations of this and the previous space are equivalent up to the $ \prep14 $ modules, the computation here is identical to the above.

    \end{proof}

    \begin{bibdiv}
        \begin{biblist}
            
            \bib{alekseevskii_kimelfeld_1975}{article}{
                author={Alekseevskii, Dimitri},
                author={Kimel'fe'ld, Boris},
                title={Structure of homogeneous Riemann spaces with zero Ricci curvature},
                journal={Functional Analysis and its Applications},
                volume={9},
                year={1975},
                pages={97-102}
            }

            \bib{alekseevsky_2012}{article}{
                title={Homogeneous Lorentzian manifolds of a semisimple group},
                author={Alekseevskii, Dimitri},
                journal={J. Geom. Phys},
                volume={62},
                year={2012},
                pages={631-645}
            }

            \bib{arroyo_lafuente_2015}{article}{
                title={Homogeneous Ricci solitons in low dimensions},
                journal={Int. Math. Res. Not},
                year={2015},
                volume={13},
                pages={4901-4932},
                author={Arroyo, Romina},
                author={Lafuente, Ramiro}
            }

            \bib{arroyo_lafuente_2016}{article}{
                title={The Alekseevskii conjecture in low dimensions},
                journal={Math. Ann.}, 
                author={Arroyo, Romina},
                author={Lafuente, Ramiro}, 
                volume={367},
                year={2016}, 
                pages={283-309}
            }

            \bib{audin_2004}{book}{
                place={France},
                title={Torus Actions on Symplectic Manifolds},
                publisher={Birkh\"auser Verlag},
                author={Audin, Mich\`ele},
                series={Progress in Mathematics},
                volume={93},
                year={2004}
            }

            \bib{besse_2008}{book}{
                place={Berlin, Heidelberg},
                title={Einstein Manifolds},
                publisher={Springer-Verlag Berlin Heidelberg},
                author={Besse, Arthur},
                series={Ergebnisse der Mathematik und ihrer Grenzgebiete (3) [Results in Mathematics and Related Areas (3)]},
                volume={10},
                year={2008}
            }

            \bib{bochner_1948}{article}{
                author={Bochner, Salomon},
                title={Curvature and Betti Numbers},
                journal={Ann. Math.},
                volume={49},
                year={1948},
                pages={379-390}
            }

            \bib{bohm_2004}{article}{
                author={B\"ohm, Christoph},
                title = {Homogeneous Einstein metrics and Simplicial Complexes},
                journal= {J. Differential Geom.},
                volume = {67},
                year = {2004},
                pages = {79-165}
            }

            \bib{bohm_kerr_2004}{article}{
                title={Low-Dimensional Homogeneous Einstein Manifolds},
                journal={Trans. Amer. Math. Soc.},
                author={B\"ohm, Christoph}
                author={Kerr, Megan},
                volume={358},
                number={4},
                year={2006},
                pages={1455-1468}
            }

            \bib{bohm_lafuente_2019}{article}{
                title={Homogeneous Einstein metrics on Euclidean spaces are Einstein solvmanifolds},
                journal={preprint},
                author={B\"ohm, Christoph},
                author={Lafuente, Ramiro},
                year={2019}	
            }

            \bib{dickinson_kerr_2008}{article}{
                title={The geometry of compact homogeneous spaces with two isotropy summands},
                journal={Annals of Global Analysis and Geometry},
                author={Dickinson, William},
                author={Kerr, Megan},
                year={2008},
                volume={34},
                pages={329-350}
            }

            \bib{dotti_1988}{article}{
                author={Dotti, Isabel},
                title={Transitive group actions and ricci curvature properties.},
                journal={Michigan Math. J.},
                volume={35},
                year={1988},
                number={3},
                pages={427-434}
            }

            \bib{heber_1998}{article}{
                author={Heber, Jens},
                title={non-compact homogeneous Einstein Spaces},
                journal={Invent. Math.},
                volume={133},
                year={1998},
                number={2},
                pages={297-352}
            }

            \bib{helgason_1978}{book}{
                place={New York, London},
                title={Differential geometry, Lie groups, and symmetric spaces},
                publisher={Academic Press, Inc. [Harcourt Brace Jovanovich, Publishers]},
                volume={80},
                series={Graduate Studies in Mathematics},
                author={Helgason, Sigurdur},
                year={1978}
            }

            \bib{jablonski_2015}{article}{
                title={Strongly Solvable Spaces},
                author={Jablonski, Michael},
                journal={Duke Math. J.},
                volume={164},
                year={2015},
                number={2},
                pages={361-402}
            }

            \bib{jablonski_petersen_2017}{article}{
                author={Jablonski, Michael},
                author={Petersen, Peter},
                title={A step towards the Alekseevskii conjecture},
                year={2017},
                journal={Math. Ann.},
                volume={368},
                pages={197-212}
            }

            \bib{jensen_1969}{article}{
                author = {Jensen, Gary},
                journal = {J. Differential Geom.},
                pages = {309-349},
                publisher = {Lehigh University},
                title = {Homogeneous Einstein spaces of dimension four},
                volume = {3},
                year = {1969}
            }

            \bib{kobayashi_nomizu_vol_II}{book}{
                author = {Kobayashi, Shoshichi},
                author = {Nomizu, Katsumi},
                place = {New York},
                title = {Foundations of Differential Geometry, Volume II},
                publisher = {Interscience Publishers, John Wiley and Sons},
                year = {1969}
            }


            \bib{lafuente_lauret_2014}{article}{
                author={Lafuente, Ramiro},
                author={Lauret, Jorge},
                title={Structure of homogeneous Ricci solitons and the Alekseevskii conjecture},
                journal={J. Differential Geom.},
                volume={98},
                year={2014},
                number={2},
                pages={315-347}
            }

            \bib{lauret_2009}{article}{
                author={Lauret, Jorge},
                title={Einstein solvmanifolds and nilsolitons},
                journal={Contemp. Math.},
                year={2009},
                volume={491},
                pages={1-35}
            }

            \bib{lauret_2010}{article}{
                title={Einstein solvmanifolds are standard},
                journal={Ann. of Math.}
                volume={172},
                year={2010},
                number={3},
                pages={1859-1877},
                author={Lauret, Jorge}
            }



            \bib{milnor_1976}{article}{
                author = {Milnor, John},
                title = {Curvatures of Left Invariant Metrics on Lie Groups},
                journal = {Adv. Math.},
                year = {1976},
                volume = {21},
                pages = {293-329}
            }

            \bib{myers_1941}{article}{
                author = {Myers, Sumner},
                title = {Riemannian manifolds with positive mean curvature},
                year = {1941},
                journal = {Duke. Math.},
                volume = {8},
                number = {2},
                pages = {401-404}
            }

            \bib{nikonorov_2000}{article}{
                title={On the Ricci curvature of homogeneous metrics on non-compact homogeneous spaces},
                journal={Sib. Math. J.},
                author={Nikonorov, Yuri},
                volume={41},
                year={2000},
                pages={349-356}
            }

            \bib{nikonorov_2005}{article}{
                title={non-compact Homogeneous Einstein 5-Manifolds},
                journal={Geom. Dedicata.},
                author={Nikonorov, Yuri},
                volume={113},
                year={2005},
                pages={107-143}
            }

            \bib{onishchik_vinberg_1988}{book}{
                place={Moscow},
                title={Lie groups and Lie algebras I},
                subtitle={Foundations of Lie theory, Lie transformation groups},
                publisher={Springer-Verlag},
                place={Berlin, Heidelberg},
                author={Onishchik, Arkadij},
                author={Vinberg, Ernest},
                year={1988},
                volume={20}
            }

            \bib{varadarajan_1984}{book}{
                author = {Varadarajan, Veeravalli},
                title = {Lie Groups, Lie Algebras, and their Representations},
                date = {1984},
                publisher = {Springer-Verlag New York},
                volume={102},
                year={1984},
                series={Graduate Texts in Mathematics}
            }

            \bib{wang_1982}{article}{
                author = {Wang, McKenzie},
                year = {1982},
                title = {Some examples of homogeneous Einstein manifolds in dimension seven},
                volume={49},
                pages={23-28},
                journal = {Duke Math. J.},
            }

            \bib{wang_ziller_1986}{article}{
                author = {Wang, McKenzie},
                author = {Ziller, Wolfgang},
                year = {1986},
                title = {Existence and non-Existence of homogeneous Einstein metrics},
                journal = {Invent. Math.},
                volume = {84},
                pages = {177-194}
            }

            \bib{yan_chen_deng_2019}{article}{
                author={Yan, Zaili},
                author={Chen, Huibin},
                author={Deng, Shaoqiang},
                title={Classification of Invariant Einstein metrics on certain compact homogeneous spaces},
                volume={63},
                number={4},
                pages={755-776},
                journal={Sci China Math},
                year={2019}
            }

        \end{biblist}
    \end{bibdiv}

\end{document}